\newtheorem{theorem}{Theorem}
\newtheorem{proposition}[theorem]{Proposition}
\newtheorem{lemma}[theorem]{Lemma}
\theoremstyle{definition}
\newtheorem*{def*}{Definition}
\newtheorem{remark}{Remark}
\newtheorem*{thm*}{Theorem}
\newtheorem*{lem*}{Lemma}
\newtheorem*{claim*}{Claim}
\newtheorem*{prop*}{Proposition}
\newtheorem*{rem*}{Remark}
\numberwithin{equation}{section}
\title[Dyadic fractional Sobolev]{Fractional Sobolev embeddings and algebra property: A dyadic view}
\author{Patricia Alonso Ruiz}
\address{Institute of Mathematics, Friedrich Schiller University Jena, Germany}
\email{patricia.alonso.ruiz@uni-jena.de}
\author{Valentia Fragkiadaki}
\address{Department of Mathematics, Texas A{\&}M University, US}
\email{valeria96@tamu.edu}
\thanks{Research partly supported by NSF grant DMS~2140664.}
\begin{document}
\begin{abstract}
    This paper revisits classical fractional Sobolev embedding theorems and the algebra property of the fractional Sobolev space $H^s(\mathbb{R})$ by means of Haar functions and dyadic decompositions. The aim is to provide an alternative, hands-on approach without Fourier transform that may be transferred to settings where the latter is not available. Explicit counterexamples are constructed to show the failure of the algebra property in the low-regularity regime.
\end{abstract}
\maketitle

{\small
\textbf{2010 MSC:} 46E35; 26A33; 42A99\\
\textbf{Keywords:} Sobolev embedding; algebra property; Haar functions; Dyadic harmonic analysis.\\
}
\tableofcontents


\section{Introduction}

\medskip

Calculus of variations, partial differential equations, harmonic analysis, stochastic processes... a wide variety of areas put their hands on fractional Sobolev spaces, for instance, to study questions involving non-local operators such as the fractional Laplacian. There are a number of ways to characterize these spaces in the Euclidean case. One can use the Fourier transform and Littlewood–Paley decomposition, interpolation theory, singular integrals, and also semigroups and subordination. There is a big fractional world out there!

\bigskip

In this paper, we are going to approach the fractional Sobolev space $H^s(\mathbb{R})$ with $0<s<1$ from a dyadic point of view and revisit two important aspects of them: Sobolev embeddings and the algebra property. Why these? And why the dyadic setting? 

\bigskip

Sobolev embeddings are a basic tool from functional analysis that have extensive applications, in particular in the theory of PDEs. Their scope would be difficult to capture here, and we refer to textbooks like~\cite{Eva10} or~\cite{Leo23} for further details. In the form presented in the present paper, the embeddings state that
\begin{equation}\label{E:classical_embedding}
    H^{s}_{\mathcal{D}}(\mathbb{R})\subseteq
    \begin{cases}
        L^{\frac{2}{1-2s}}(\mathbb{R})&\text{if }0<s<\frac{1}{2},\\
        {\rm BMO_{\mathcal{D}}}(\mathbb{R})&\text{if }s=\frac{1}{2},\\
        L^\infty(\mathbb{R})&\text{if }s>\frac{1}{2}.
    \end{cases}
\end{equation}
Here, ${\rm BMO_{\mathcal{D}}}(\mathbb{R})$ refers to the dyadic space of \emph{bounded mean oscillation} functions in $\mathbb{R}$ for a dyadic grid $\mathcal{D}$, c.f. Section~\ref{SS:BMO}.

\bigskip

The more interesting question for us is the algebra property of the space $H^s(\mathbb{R})$, which concerns the question, whether the space is closed under pointwise multiplication. Our interest in this property, proved first by Strichartz in~\cite{Str67} and strengthened later by Kato and Ponce in~\cite{KP88}, stems from its importance when studying well-posedness in $H^s(\mathbb{R})$ of nonlinear PDEs, see e.g.~\cite{Caz03} and references therein. In its original form, the result states that 

\begin{equation}\label{E:classical_algebra}
    H^s(\mathbb{R})\text{ is an algebra if and only if }s>1/2.
\end{equation}

\bigskip

And why the dyadic setting? Because we wanted to investigate an alternative approach that would bypass the Fourier transform and avoid abstract (yet powerful) arguments, as in the case of interpolation or semigroup theory. We were greatly motivated to write this paper after reading a series of works by Aimar and coauthors~\cite{ABG13,AA15,ACGN23}, where we learned about the fractional Sobolev space $H^s(\mathbb{R})$ in the dyadic setting. After we finished the paper, we found out about work by Garrig\'os, Seeger and Ullrich dealing with dyadic versions of Triebel-Lizorkin spaces and their embeddings~\cite{GSU17,GSU18,GSU23}. Since $H^s(\mathbb{R})$ is a special case of such a space, it will be interesting to analyze and compare both approaches in the future. Looking ahead, we hope that the techniques involved in this paper can offer a hands-on approach to these and related questions in other settings where no Fourier transform is available.

\bigskip

The paper is organized as follows: Section~\ref{S:defs_back} provides the main definitions and background information regarding the standard dyadic system in $\mathbb{R}$ and the corresponding definition of the dyadic fractional Sobolev space $H^s_{\mathcal{D}}(\mathbb{R})$. The section also includes some useful inequalities of independent interest. Section~\ref{S:embeddings} presents the dyadic proofs of the fractional Sobolev embeddings~\eqref{E:classical_embedding}, treating each regularity range separately. With a similar structure, Section~\ref{S:algebra_prop} proves the classical algebra property~\eqref{E:classical_algebra}, constructing explicit counterexamples in the cases where the property fails. To the best of our knowledge, these are genuinely new.

\section{Definitions and background}\label{S:defs_back}
In this section we set up notation, definitions and some basic results in the dyadic setting that will be used throughout the paper. 
\subsection{The dyadic setting}
A dyadic grid $\mathcal{D}$ on $\mathbb{R}$ is a collection of intervals that satisfies:
	\begin{enumerate}[label={\rm (D\arabic*)}]
	\item Every $I\in\mathcal{D}$ has length $\ell(I)=2^k$ for some $k\in \mathbb{Z}$,
	\item For any fixed $k_0\in \mathbb{Z}$, the sub-collection $\{I\in\mathcal{D}: \ell(I)=2^{k_0} \}$ forms a partition of $\mathbb{R}$,
	\item Any two dyadic intervals are either disjoint, or one contains the other, i.e. $I\cap J \in \{ \varnothing , I,J \}$ for all $I,J \in \mathcal{D}$.
	\end{enumerate}
Given a dyadic grid $\mathcal{D}$, an interval $I\in\mathcal{D}$ and $k\geq 0$, the dyadic grid of level $k$ adapted to $I$ is defined as
\begin{equation}\label{E:def_grid_level_k}
    \mathcal{D}_k(I):=\Big\{J\in \mathcal{D}\colon J\subset I\text{ and }|J|=2^{-k}|I|\Big\},
\end{equation}
where $|\cdot|$ indicates the Lebesgue measure on $\mathbb{R}$. We usually write $\mathcal{D}(I):=\mathcal{D}_0(I)$.

\medskip

From now on, we consider a dyadic grid $\mathcal{D}$ in $\mathbb{R}$ and its associated Haar basis of $L^2(\mathbb{R},dx)$, which consists of the functions $\{h_I\}_{I\in\mathcal{D}}$ given by
\begin{equation}\label{E:def_Haar_fct}
    h_I(x):= \frac{1}{\sqrt{|I|}}(\mathbf{1}_{I_+} - \mathbf{1}_{I_-}),
\end{equation}
where $I_+$ denotes the right half and $I_-$ the left half  of $I$.
Note that, if $J \in \mathcal{D}$ and $J\subsetneq I$, then $h_I$ is constant on $J$. We will often denote that constant by $h_I(J)$, specifically
\begin{equation}\label{E:hI_in_J}
    h_I(J):=h_I(x)|_J=
    \begin{cases}
        |I|^{-1/2} & \text{if  } J \subset I_+\\
        -|I|^{-1/2} & \text{if  } J \subset I_-.
    \end{cases}
\end{equation}

The role of Schwarz functions is played by the linear span of the Haar basis $\{h_I\}_{I\in\mathcal{D}}$, that is
\begin{equation}\label{E:def_Haar_span}
    \mathscr{S}_{\mathcal{D}}(\mathbb{R}):={\rm span}\{h_I\colon I\in \mathcal{D}\}.
\end{equation}
The latter is dense in $L^2(\mathbb{R},dx)$, hence for simplicity the results presented in the paper will be stated and proved for functions in $\mathscr{S}_{\mathcal{D}}(\mathbb{R})$. In addition, the $L^2$-inner product of two functions $f,g\in \mathscr{S}_{\mathcal{D}}(\mathbb{R})$ will be denoted by
\begin{equation}\label{E:def_inner_prod_average}
    (f,g):=\int_{\mathbb{R}}fg\,dx
\end{equation}
and the average over an interval $I\in\mathcal{D}$ by
\begin{equation}\label{E:def_average}
    \langle f\rangle_I:= \frac{1}{|I|} \int_I f \,dx.
\end{equation}
The proofs developed in the subsequent section rely on the properties of the Haar expansion 
\begin{equation*}
    f=\sum_{I\in\mathcal{D}} (f, h_I) h_I 
\end{equation*}
and the arising expression for the average
\begin{equation}\label{E:avg_char}
    \langle f\rangle_I = \sum_{J \supsetneq I} (f, h_J) h_J(I).
\end{equation}
\subsection{Useful equalities}
This paragraph collects several observations that will be used in the proofs and may be of independent interest to the reader.

\begin{proposition}\label{P:weighted_indicator}
Let $s>0$ and $I\in\mathcal{D}$. Then,
\begin{equation}\label{E:weighted_indicator}
    \sum_{J\subseteq I}|J|^s\mathbf{1}_J(x)=\frac{1}{1-2^{-s}}|I|^s\mathbf{1}_I(x)
\end{equation}
for any $x\in\mathbb{R}$.
\end{proposition}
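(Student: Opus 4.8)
The plan is to fix $x\in\mathbb{R}$ and verify the identity pointwise, splitting into two cases according to whether $x$ lies in $I$. If $x\notin I$, then every $J\subseteq I$ in $\mathcal{D}$ satisfies $\mathbf{1}_J(x)=0$, so the left-hand side is zero; the right-hand side is zero as well since $\mathbf{1}_I(x)=0$, and there is nothing more to do.

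The substantive case is $x\in I$. Here I would first identify which terms actually contribute: only those $J\subseteq I$ with $x\in J$. The key structural input is that, for each integer $k\geq0$, the subcollection $\mathcal{D}_k(I)$ from \eqref{E:def_grid_level_k} partitions $I$ (a consequence of (D2), or equivalently of the nestedness (D3)), so there is exactly one interval $J_k\in\mathcal{D}_k(I)$ containing $x$, and $|J_k|=2^{-k}|I|$. Moreover every $J\in\mathcal{D}$ with $J\subseteq I$ and $x\in J$ equals $J_k$ for exactly one $k\geq 0$. This gives a bijection between the contributing intervals and $\{0,1,2,\dots\}$.

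With that bijection in hand, the sum collapses to a geometric series:
\begin{equation*}
    \sum_{J\subseteq I}|J|^s\mathbf{1}_J(x)=\sum_{k=0}^{\infty}|J_k|^s=|I|^s\sum_{k=0}^{\infty}2^{-ks}=\frac{|I|^s}{1-2^{-s}},
\end{equation*}
where the hypothesis $s>0$ is used precisely to ensure $2^{-s}<1$ and hence convergence of the series to $(1-2^{-s})^{-1}$. Since $x\in I$, this equals $\frac{1}{1-2^{-s}}|I|^s\mathbf{1}_I(x)$, which is the claimed identity. The only step needing genuine justification is the bijection between nonnegative integers and the dyadic descendants of $I$ through a fixed point — this is immediate from (D2)–(D3) — while the rest is the summation of a geometric series; I do not anticipate any real obstacle here.
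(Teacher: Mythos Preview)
Your argument is correct and essentially identical to the paper's: both split into the cases $x\notin I$ and $x\in I$, then use that $\mathcal{D}_k(I)$ partitions $I$ to reduce the sum to the geometric series $|I|^s\sum_{k\geq 0}2^{-ks}$. The only cosmetic difference is that the paper writes the inner sum $\sum_{J\in\mathcal{D}_k(I)}\mathbf{1}_J(x)=1$ explicitly, whereas you phrase the same fact as a bijection $k\mapsto J_k$.
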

\begin{proof}
    If $x\notin I$, then clearly both sides of~\eqref{E:weighted_indicator} are zero. Let us thus assume that $x\in I$. Then, rewriting the left-hand side with the dyadic grid from~\eqref{E:def_grid_level_k} one gets
    \begin{align*}
     \sum_{J\subseteq I}|J|^s\mathbf{1}_J(x)&=\sum_{k=0}^\infty \sum_{J\in\mathcal{D}_k(I)}|J|^s\mathbf{1}_J(x)\\
     &=\sum_{k=0}^\infty\sum_{J\in \mathcal{D}_k(I)}2^{-ks}|I|^s\mathbf{1}_J(x)\\
     &=|I|^s\sum_{k=0}^\infty2^{-ks}\bigg(\sum_{J\in\mathcal{D}_k(I)}\mathbf{1}_J(x)\bigg)
     =|I|^s\frac{1}{1-2^{-s}},
    \end{align*}
    where in the last equality we use the fact that the intervals in $\mathcal{D}_k(I)$ are disjoint.
\end{proof}

\begin{proposition}\label{P:weighted_hI}
 Let $s>0$ and $I\in\mathcal{D}$. Then,
 \begin{equation}\label{E:weighted_hI}
     \sum_{J\subsetneq I}|J|^sh_I(J)\mathbf{1}_J(x)=\frac{1}{2^s-1}|I|^sh_I(x)
 \end{equation}
 for every $x\in \mathbb{R}$.
\end{proposition}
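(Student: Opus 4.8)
The plan is to reduce this to Proposition~\ref{P:weighted_indicator} by splitting the sum according to which half of $I$ a subinterval lies in. First I would observe that every $J\in\mathcal{D}$ with $J\subsetneq I$ is contained in exactly one of the two halves $I_+$ or $I_-$ (this uses (D3) together with the fact that $I_\pm$ are themselves dyadic intervals of length $|I|/2$), so that the index set $\{J\in\mathcal{D}\colon J\subsetneq I\}$ is the disjoint union of $\{J\colon J\subseteq I_+\}$ and $\{J\colon J\subseteq I_-\}$. On the first family $h_I(J)=|I|^{-1/2}$ and on the second $h_I(J)=-|I|^{-1/2}$, by~\eqref{E:hI_in_J}, so the left-hand side of~\eqref{E:weighted_hI} becomes
\begin{equation*}
    |I|^{-1/2}\sum_{J\subseteq I_+}|J|^s\mathbf{1}_J(x)-|I|^{-1/2}\sum_{J\subseteq I_-}|J|^s\mathbf{1}_J(x).
\end{equation*}

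Next I would apply Proposition~\ref{P:weighted_indicator} to each of $I_+$ and $I_-$ in place of $I$, obtaining $\sum_{J\subseteq I_\pm}|J|^s\mathbf{1}_J(x)=\tfrac{1}{1-2^{-s}}|I_\pm|^s\mathbf{1}_{I_\pm}(x)$, and then substitute $|I_\pm|^s=2^{-s}|I|^s$. This collapses the expression to $|I|^{-1/2}\,\tfrac{2^{-s}}{1-2^{-s}}|I|^s\bigl(\mathbf{1}_{I_+}(x)-\mathbf{1}_{I_-}(x)\bigr)$. Finally, recognizing from the definition~\eqref{E:def_Haar_fct} that $\mathbf{1}_{I_+}(x)-\mathbf{1}_{I_-}(x)=|I|^{1/2}h_I(x)$, and simplifying $\tfrac{2^{-s}}{1-2^{-s}}=\tfrac{1}{2^s-1}$, yields the claimed identity $\tfrac{1}{2^s-1}|I|^sh_I(x)$. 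The case $x\notin I$ is trivial since both sides vanish, but it is in any event already covered by the computation above (each indicator on the right vanishes).

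There is no serious obstacle here; the only point requiring a little care is the combinatorial claim that the proper dyadic subintervals of $I$ partition into those inside $I_+$ and those inside $I_-$, which is where property (D3) enters. Everything else is a direct substitution into Proposition~\ref{P:weighted_indicator} followed by elementary algebra with the constants.
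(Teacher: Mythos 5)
Your argument is correct and is essentially identical to the paper's own proof: both split the sum over $J\subsetneq I$ into the subintervals of $I_+$ and of $I_-$, apply Proposition~\ref{P:weighted_indicator} to each half, substitute $|I_\pm|=|I|/2$, and simplify the constant $\tfrac{2^{-s}}{1-2^{-s}}=\tfrac{1}{2^s-1}$. No changes needed.
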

\begin{proof}
    By virtue of Proposition~\ref{P:weighted_indicator},
    \begin{align*}
      \sum_{J\subsetneq I}|J|^sh_I(J)\mathbf{1}_J(x)&
      =\frac{1}{\sqrt{|I|}}\bigg(\sum_{J\subseteq I_{+}}|J|^s\mathbf{1}_J(x)-\sum_{J\subseteq I_{-}}|J|^s\mathbf{1}_J(x)\bigg)\\
      &=\frac{1}{1-2^{-s}}\frac{1}{\sqrt{|I|}}\Big(|I_{+}|^s\mathbf{1}_{I_+}(x)-|I_{-}|^s\mathbf{1}_{I_{-}}(x)\Big).
    \end{align*}
    Since $|I_{+}|=|I|/2$, it follows from the above and the definition of $h_I$ that
    \[
    \sum_{J\subsetneq I}|J|^sh_I(J)\mathbf{1}_J(x)=\frac{1}{2^s-1}|I|^s\frac{1}{\sqrt{|I|}}\Big(\mathbf{1}_{I_+}(x)-\mathbf{1}_{I_{-}}(x)\Big)=\frac{1}{2^s-1}|I|^sh_I(x)
    \]
    as we wanted to prove.
\end{proof}

\subsection{Fractional Sobolev spaces}
The definition of fractional Sobolev spaces with which we work has been considered in the dyadic setting to study nonlocal Schr\"odinger operators; see, e.g.~\cite{ABG13,AABG16,ACGN23} and references therein. Its definition is inspired by the classical (Euclidean) connection in harmonic analysis between the fractional Laplace operator $(-\Delta)^{s/2}$ and the singular operator
\begin{equation}\label{E:def_Is}
I_sf(x):=\int_{\mathbb{R}}\frac{1}{|x-y|^{1-s}}f(y)dy,
\end{equation}
when $0<s<1$, namely
\begin{equation}\label{E:Muscalu-Schlag}
    f=C_sI_s(-\Delta)^{s/2}f
\end{equation}
for some (explicit) constant $C_s>0$ and any Schwarz function $f$, see e.g.~\cite[p.177]{MS13}. The dyadic analogue operator to~\eqref{E:def_Is} defined as
\begin{equation}
T_{\mathcal{D},s}f:=  \sum_{I \in \mathcal{D}} |I|^s \langle f \rangle_I \mathbf{1}_I,
\qquad f\in\mathscr{S}(\mathbb{R}),
\end{equation}
has since long been studied in the literature, see e.g.~\cite{MW74,Saw88,HRS16} and references therein. The dyadic analogue to the fractional derivative $(-\Delta)^{s/2}$ that we consider is defined as
\begin{equation}\label{E:def_Ds}
    D^s_{\mathcal{D}}f:=\sum_{I\in\mathcal{D}}|I|^{-s}(f,h_I)h_I(x).
\end{equation}
Along the lines of the classical definition of the fractional Sobolev space $H^s(\mathbb{R})$ via Fourier transform, one can thus define the seminorm
\begin{equation}\label{E:def_Hs_seminorm}
    \|f\|_{\dot{H}^s_{\mathcal{D}}(\mathbb{R})}:=\|D^s_{\mathcal{D}}f\|_{L^2(\mathbb{R})}= \bigg( \sum_{I \in \mathcal{D}} |I|^{-2s}|(f,h_I)|^2 \bigg)^{1/2} 
\end{equation}
and the dyadic fractional Sobolev space
\begin{equation}
    H_{\mathcal{D}}^s(\mathbb{R}):=\{f\in L^2(\mathbb{R})~\colon~D^s_{\mathcal{D}}f\in L^2(\mathbb{R})\}
\end{equation}
equipped with the norm 
\begin{equation}\label{E:def_Hs_norm}
    \|f\|_{H^s_{\mathcal{D}}(\mathbb{R})}:=\big(\|f\|_{\dot{H}^s_{\mathcal{D}}(\mathbb{R})}^2+\|f\|_{L^2(\mathbb{R})}^2\big)^{1/2},
\end{equation}
see~\cite[Theorem 4.1]{AA15}. We now show the connection between the dyadic fractional derivative $D^s$ and the operator $T_{\mathcal{D},s}$ in~\eqref{E:Muscalu-Schlag}.

\begin{lemma}\label{L:dyadic_MS}
    Let $0<s<1$. For any $f\in \mathscr{S}_{\mathcal{D}}^s(\mathbb{R})$,
    \[
    f=(2^s-1)T_{\mathcal{D},s}D_{\mathcal{D}}^sf.
    \]
\end{lemma}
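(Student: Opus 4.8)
The plan is to expand $T_{\mathcal{D},s}D^s_{\mathcal{D}}f$ directly using the definitions and reduce the double sum to $f$ via Proposition~\ref{P:weighted_hI}. First I would write $g:=D^s_{\mathcal{D}}f=\sum_{J\in\mathcal{D}}|J|^{-s}(f,h_J)h_J$, and then apply $T_{\mathcal{D},s}$, using formula~\eqref{E:avg_char} for the average $\langle g\rangle_I$. The key point is that $\langle h_J\rangle_I=0$ whenever $I\subseteq J$ is a proper subinterval on which $h_J$ is constant with mean zero only over $J$ itself — more precisely, $\langle h_J\rangle_I = h_J(I)$ if $I\subsetneq J$, and $\langle h_J\rangle_I=0$ if $I\supseteq J$ or $I\cap J=\varnothing$. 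Hence
\begin{equation*}
    T_{\mathcal{D},s}g=\sum_{I\in\mathcal{D}}|I|^s\Big(\sum_{J\supsetneq I}|J|^{-s}(f,h_J)h_J(I)\Big)\mathbf{1}_I.
\end{equation*}

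Next I would swap the order of summation so that $J$ is the outer index: this gives $\sum_{J\in\mathcal{D}}|J|^{-s}(f,h_J)\sum_{I\subsetneq J}|I|^s h_J(I)\mathbf{1}_I$. The inner sum over $I\subsetneq J$ is exactly the quantity computed in Proposition~\ref{P:weighted_hI} (with the roles of $I$ and $J$ swapped relative to the statement there), which equals $\frac{1}{2^s-1}|J|^s h_J(x)$. Substituting this in, the factors $|J|^{-s}$ and $|J|^s$ cancel, leaving
\begin{equation*}
    T_{\mathcal{D},s}D^s_{\mathcal{D}}f=\frac{1}{2^s-1}\sum_{J\in\mathcal{D}}(f,h_J)h_J=\frac{1}{2^s-1}\,f,
\end{equation*}
where the last equality is the Haar expansion of $f$. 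Multiplying through by $2^s-1$ yields the claim.

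The main obstacle — really the only one — is justifying the interchange of summations and the termwise application of~\eqref{E:avg_char}. Since we work with $f\in\mathscr{S}_{\mathcal{D}}(\mathbb{R})$, only finitely many coefficients $(f,h_J)$ are nonzero, so $g=D^s_{\mathcal{D}}f$ is again a finite Haar combination; the sum defining $T_{\mathcal{D},s}g$ is then locally finite in a suitable sense and all rearrangements are legitimate. I would also double-check the bookkeeping of which $I$ contribute to $\langle h_J\rangle_I$: the formula~\eqref{E:avg_char} already encodes that only strictly larger intervals $J\supsetneq I$ matter, so the reindexing is clean. One should note the harmless index clash — the $s$ in the superscript of $\mathscr{S}^s_{\mathcal{D}}(\mathbb{R})$ in the statement should presumably read $\mathscr{S}_{\mathcal{D}}(\mathbb{R})$ — but this does not affect the argument.
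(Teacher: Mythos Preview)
Your proof is correct and takes essentially the same approach as the paper: expand $T_{\mathcal{D},s}g$ via~\eqref{E:avg_char}, interchange the order of summation, and apply Proposition~\ref{P:weighted_hI} to collapse the inner sum. The only cosmetic difference is that the paper first proves the identity $(2^s-1)T_{\mathcal{D},s}g=\sum_{J}|J|^s(g,h_J)h_J$ for a generic $g\in\mathscr{S}_{\mathcal{D}}(\mathbb{R})$ and then substitutes $g=D^s_{\mathcal{D}}f$ at the end, whereas you carry the explicit coefficients of $D^s_{\mathcal{D}}f$ through from the start; your observation about the typo $\mathscr{S}^s_{\mathcal{D}}$ is also correct.
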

\begin{proof}
Consider first $g\in\mathscr{S}_{\mathcal{D}}^s(\mathbb{R})$. Using the Haar expansion of the average from~\eqref{E:avg_char}, interchanging the order of summation and applying Proposition~\ref{P:weighted_hI},
    \begin{align*}
        (2^s-1)T_{\mathcal{D},s}g&=(2^s-1)\sum_{I \in \mathcal{D}} |I|^s \langle g \rangle_I \mathbf{1}_I\\
        &=(2^s-1)\sum_{I \in \mathcal{D}} |I|^s\mathbf{1}_I(x)\sum_{J\supsetneq I}(g,h_J)h_{J}(I)\\
        &=\sum_{J\in\mathcal{D}}(g,h_J)(2^s-1)\sum_{I\subsetneq J}|I|^sh_J(I)\mathbf{1}_I(x)\\
        &=\sum_{J\in\mathcal{D}}|J|^s(g,h_J)h_J.
    \end{align*}
    Plugging $g=D^s_{\mathcal{D}}f$ the result follows.
\end{proof}

We close this section with an observation that will be useful in subsequent proofs. Namely, that only small size intervals matter when computing the $H^s$-norm~\eqref{E:def_Hs_seminorm} for small $s$.
\begin{lemma}\label{L:norm_equivalence}
    Let $0<s<1$. For any $f\in \mathscr{S}_{\mathcal{D}}(\mathbb{R})$ 
    \begin{equation}\label{E:seminorm_equivalence}
    \frac{1}{2}\|f\|_{H^s_{\mathcal{D}}(\mathbb{R})}^2\leq \|f\|_{L^2(\mathbb{R})}^2 + \sum_{\substack{I\in \mathcal{D} \\ |I|<1}} |I|^{-2s}|(f,h_I)|^2\leq \|f\|_{H^s_{\mathcal{D}}(\mathbb{R})}^2.
    \end{equation}
\end{lemma}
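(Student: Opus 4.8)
The statement splits the seminorm sum $\sum_{I\in\mathcal{D}}|I|^{-2s}|(f,h_I)|^2$ according to whether $|I|<1$ or $|I|\geq 1$, and claims that the large-interval part is controlled by $\|f\|_{L^2}^2$. The plan is therefore to isolate the sum over $I$ with $|I|\geq 1$ and bound it by $\|f\|_{L^2(\mathbb{R})}^2$. Since $|I|\geq 1$ forces $|I|^{-2s}\leq 1$ (as $s>0$), one immediately gets
\[
\sum_{\substack{I\in\mathcal{D}\\ |I|\geq 1}}|I|^{-2s}|(f,h_I)|^2\leq \sum_{\substack{I\in\mathcal{D}\\ |I|\geq 1}}|(f,h_I)|^2\leq \sum_{I\in\mathcal{D}}|(f,h_I)|^2=\|f\|_{L^2(\mathbb{R})}^2,
\]
the last equality being Parseval for the Haar basis. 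This is the only real estimate needed.

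For the right-hand inequality of~\eqref{E:seminorm_equivalence}, write
\[
\|f\|_{L^2}^2+\sum_{\substack{I\in\mathcal{D}\\|I|<1}}|I|^{-2s}|(f,h_I)|^2
\leq \|f\|_{L^2}^2+\sum_{I\in\mathcal{D}}|I|^{-2s}|(f,h_I)|^2
=\|f\|_{L^2}^2+\|f\|_{\dot H^s_{\mathcal{D}}(\mathbb{R})}^2=\|f\|_{H^s_{\mathcal{D}}(\mathbb{R})}^2,
\]
which is trivial since we are only adding back the nonnegative large-interval terms. For the left-hand inequality, add the large-interval estimate above to the displayed left-hand middle quantity: using $\|f\|_{\dot H^s_{\mathcal{D}}}^2=\sum_{|I|<1}+\sum_{|I|\geq 1}$ and the bound $\sum_{|I|\geq 1}|I|^{-2s}|(f,h_I)|^2\leq\|f\|_{L^2}^2$, we obtain
\[
\|f\|_{H^s_{\mathcal{D}}(\mathbb{R})}^2=\|f\|_{L^2}^2+\|f\|_{\dot H^s_{\mathcal{D}}}^2
\leq 2\|f\|_{L^2}^2+\sum_{\substack{I\in\mathcal{D}\\|I|<1}}|I|^{-2s}|(f,h_I)|^2
\leq 2\Big(\|f\|_{L^2}^2+\sum_{\substack{I\in\mathcal{D}\\|I|<1}}|I|^{-2s}|(f,h_I)|^2\Big),
\]
which rearranges to the claimed factor $\tfrac12$.

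There is essentially no obstacle here: the only ingredients are $|I|^{-2s}\leq 1$ when $|I|\geq 1$ and $s>0$, together with the Parseval identity $\sum_{I\in\mathcal{D}}|(f,h_I)|^2=\|f\|_{L^2(\mathbb{R})}^2$, valid for $f\in\mathscr{S}_{\mathcal{D}}(\mathbb{R})$ by orthonormality of the Haar system. The restriction to $\mathscr{S}_{\mathcal{D}}(\mathbb{R})$ also guarantees the Haar expansion is a finite sum, so all interchanges are harmless. The point of the lemma is purely organizational — it records that, up to the $L^2$ term, the $H^s_{\mathcal{D}}$ seminorm only "sees" intervals of length less than $1$ — so the proof is just these few lines of bookkeeping.
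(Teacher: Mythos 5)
Your proof is correct and follows essentially the same route as the paper's: the upper bound is trivial from positivity of the discarded terms, and the lower bound rests on the single observation that $|I|^{-2s}\leq 1$ for $|I|\geq 1$ combined with Parseval for the Haar system. The only cosmetic difference is that you isolate the large-interval sum and state the Parseval step explicitly, whereas the paper absorbs it into a chain of inequalities; the content is identical.
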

\begin{proof}
    First, notice that 
    \[
    \|f\|_{L^2(\mathbb{R})}^2 + \sum_{\substack{I\in \mathcal{D} \\ |I|<1}} |I|^{-2s}|(f,h_I)|^2 \leq \|f\|_{L^2(\mathbb{R})}^2 +\sum_{I\in \mathcal{D}} |I|^{-2s}|(f,h_I)|^2=\|f\|_{H^s_{\mathcal{D}}(\mathbb{R})}^2
    \]
    since all terms inside the last summation are positive.
    Moreover, 
    \begin{align*}
        \|f\|_{H^s_{\mathcal{D}}(\mathbb{R})}^2 & = \|f\|_{L^2(\mathbb{R})}^2 + \sum_{\substack{I\in \mathcal{D} \\ |I|<1}} |I|^{-2s}|(f,h_I)|^2 + \sum_{\substack{I\in \mathcal{D} \\ |I|\geq 1}} |I|^{-2s}|(f,h_I)|^2 \\
        & \leq \|f\|_{L^2(\mathbb{R})}^2 + \sum_{\substack{I\in \mathcal{D} \\ |I|<1}} |I|^{-2s}|(f,h_I)|^2 + \sum_{\substack{I\in \mathcal{D} \\ |I|\geq 1}} |(f,h_I)|^2 \\
        & \leq \|f\|_{L^2(\mathbb{R})}^2 + \sum_{\substack{I\in \mathcal{D} \\ |I|<1}} |I|^{-2s}|(f,h_I)|^2  + \sum_{I\in \mathcal{D}} |(f,h_I)|^2 \\
        & \leq 2\|f\|_{L^2(\mathbb{R})}^2 + 2 \sum_{\substack{I\in \mathcal{D} \\ |I|<1}} |I|^{-2s}|(f,h_I)|^2
    \end{align*}
    which proves the result.
\end{proof}

\begin{remark}\label{R:notation}
    For ease of notation, we will drop the subscript $\mathcal{D}$ from norms and operators as long as no confusion may occur. 
\end{remark}

\section{Fractional embeddings}\label{S:embeddings}
In this section we present the dyadic analogue to the classical fractional embeddings for $p=2$, see e.g.~\cite[Chapter 2]{Leo23}. The proofs also rely on purely dyadic methods, and each regularity range is treated separately for the reader's convenience. Throughout the section, the expression $A\apprle_s B$ means that $A$ is bounded by a constant that depends on $s$ times $B$, and $A\simeq B$ means that $A$ is bounded above and below by constant times $B$.
\subsection{\texorpdfstring{Gagliardo-Nirenberg-Sobolev: $0<s<1/2$}{[small]}}
We start by proving the analogue to the Sobolev-Gagliardo-Nirenberg embedding, see e.g.~\cite[Theorem 2.2]{Leo23}. To this end, Lemma~\ref{L:dyadic_MS} will allow us to apply known estimates for the operator $T_s$ to one of the classical proofs of the Euclidean fractional Sobolev embedding in the range $0<s<1/2$.
\begin{proposition}\label{P:Embedding_below}
    Let $0<s<\frac{1}{2}$. Then, $\displaystyle H^s(\mathbb{R})\subseteq L^{\frac{2}{1-2s}}(\mathbb{R})$. In particular,
    \begin{equation}\label{E:embedding_low}
    \|f\|_{L^{q}(\mathbb{R},dx)}\apprle_s \|f\|_{\dot{H}^s(\mathbb{R})}
    \end{equation}
    for any $f\in \mathscr{S}(\mathbb{R})$ with $q:=\frac{2}{1-2s}$.
\end{proposition}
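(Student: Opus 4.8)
The plan is to reduce the dyadic fractional Sobolev embedding to a known boundedness property of the dyadic fractional integral operator $T_{\mathcal{D},s}$, using the identity from Lemma~\ref{L:dyadic_MS}. By that lemma, for $f\in\mathscr{S}_{\mathcal{D}}(\mathbb{R})$ we may write $f=(2^s-1)T_{\mathcal{D},s}D^s_{\mathcal{D}}f$, so that
\[
\|f\|_{L^q(\mathbb{R})}=(2^s-1)\,\|T_{\mathcal{D},s}(D^s_{\mathcal{D}}f)\|_{L^q(\mathbb{R})}.
\]
Hence it suffices to prove a dyadic fractional integration (Hardy--Littlewood--Sobolev) inequality of the form $\|T_{\mathcal{D},s}g\|_{L^q(\mathbb{R})}\apprle_s\|g\|_{L^2(\mathbb{R})}$ for $g\in\mathscr{S}_{\mathcal{D}}(\mathbb{R})$, with $q=\frac{2}{1-2s}$, and then apply it to $g=D^s_{\mathcal{D}}f$, noting that $\|D^s_{\mathcal{D}}f\|_{L^2(\mathbb{R})}=\|f\|_{\dot H^s(\mathbb{R})}$ by~\eqref{E:def_Hs_seminorm}.

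For the operator bound I would follow the standard real-variable proof of the Hardy--Littlewood--Sobolev inequality adapted to the dyadic sum. Fix $g$ with $\|g\|_{L^2}=1$ (by homogeneity) and a height $\lambda>0$. Split the defining sum $T_{\mathcal{D},s}g=\sum_{I\in\mathcal{D}}|I|^s\langle g\rangle_I\mathbf 1_I$ into a piece over intervals with $|I|\le R$ and a piece over intervals with $|I|>R$, where $R=R(\lambda)$ will be chosen. Using Proposition~\ref{P:weighted_indicator}, the small-scale piece is controlled pointwise by a maximal-type quantity times $\sum_{|I|\le R}|I|^s\mathbf 1_I\apprle_s R^s M g$ where $Mg$ is the dyadic maximal function; the large-scale piece is controlled, via Cauchy--Schwarz in the average $\langle g\rangle_I$, by a bound of the form $\apprle_s R^{s-1/2}\|g\|_{L^2}$. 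Optimizing these two bounds against $\lambda$ yields a distributional estimate $|\{|T_{\mathcal{D},s}g|>\lambda\}|\apprle_s \lambda^{-q}$, i.e. the weak-type $(2,q)$ bound; interpolating between two such weak-type bounds (or applying Marcinkiewicz) upgrades it to the strong-type bound $\|T_{\mathcal{D},s}g\|_{L^q}\apprle_s\|g\|_{L^2}$. Alternatively, one can cite the literature directly: the two-weight theory for $T_{\mathcal{D},s}$ (e.g.~\cite{MW74,Saw88,HRS16}) gives the unweighted $L^2\to L^q$ bound with $q=\frac{2}{1-2s}$ immediately, and one could simply invoke it.

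The main obstacle is the operator estimate $\|T_{\mathcal{D},s}g\|_{L^q}\apprle_s\|g\|_{L^2}$ itself: the splitting-and-optimization argument requires care in getting the two scale-dependent bounds to have matching powers of $R$ that reproduce the exponent $q=\frac{2}{1-2s}$, and in justifying the pointwise maximal-function domination (here Proposition~\ref{P:weighted_indicator} is what makes the geometric series over nested dyadic scales converge). Everything else — the reduction via Lemma~\ref{L:dyadic_MS}, the identification of $\|D^s_{\mathcal{D}}f\|_{L^2}$ with the homogeneous seminorm, and the density of $\mathscr{S}_{\mathcal{D}}(\mathbb{R})$ allowing us to conclude $H^s(\mathbb{R})\subseteq L^q(\mathbb{R})$ — is routine.
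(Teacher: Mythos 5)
Your proposal follows exactly the paper's route: the identity $f=(2^s-1)T_{\mathcal{D},s}D^s_{\mathcal{D}}f$ from Lemma~\ref{L:dyadic_MS} reduces the embedding to the $L^2\to L^{q}$ boundedness of the dyadic fractional integral $T_{\mathcal{D},s}$, which the paper likewise obtains by simply citing \cite[Theorem 4]{MW74}. Your additional sketch of a self-contained weak-type/interpolation proof of that operator bound is not needed (and, as written, Marcinkiewicz interpolation would require weak-type estimates at two distinct exponent pairs, not one), but since you also offer the direct citation, the argument is complete and coincides with the paper's.
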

\begin{proof}
    Note first that $\frac{1}{q}=\frac{1}{2}-\frac{2s}{2}$. We prove~\eqref{E:embedding_low}.
    By virtue of Lemma~\ref{L:dyadic_MS} and the boundedness of the operator $T_s$, c.f.~\cite[Theorem 4]{MW74}, 
    \[
    \|f\|_{L^q}=(2^s-1)\|T_s(D^{s}f)\|_{L^q}\apprle_q \|D^{s}f\|_{L^2(\mathbb{R})}.
    \]
    In particular $f\in L^{\frac{2}{1-2s}}(\mathbb{R})$.
\end{proof}
\subsection{\texorpdfstring{Sobolev-Poincar\'e: $s=1/2$}{[critical]}}\label{SS:BMO}
The fractional Sobolev embedding from Proposition~\ref{P:Embedding_below} fails at the borderline case $s=1/2$. In this section, we see that the correct space is that of dyadic bounded mean oscillation, ${\rm BMO_{\mathcal{D}}}(\mathbb{R})$. The proof will use the characterization of the BMO-norm in the dyadic setting, see for example \cite[Section 1.5]{Per01}, given by
\begin{equation}\label{E:BMO_char}
    \|f\|_{\rm BMO_{\mathcal{D}}} \simeq \sup_I \bigg( \frac{1}{|I|}\sum_{ \substack{J\in\mathcal{D} \\ J\subseteq I}} (f, h_J)^2 \bigg)^{1/2}.
\end{equation}
\begin{proposition}\label{P:embedding_BMO}
    It holds that $H^{1/2}(\mathbb{R})\subseteq {\rm BMO}(\mathbb{R})$, where both spaces are considered in the dyadic setting.
\end{proposition}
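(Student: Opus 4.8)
The plan is to compare the dyadic $\dot H^{1/2}$-seminorm of $f$ directly with the right-hand side of the BMO characterization \eqref{E:BMO_char}. Fix a dyadic interval $I$. The quantity to control is $\frac{1}{|I|}\sum_{J\subseteq I}(f,h_J)^2$, and we must show it is bounded by a constant times $\|f\|_{\dot H^{1/2}}^2 = \sum_{K\in\mathcal{D}}|K|^{-1}(f,h_K)^2$, uniformly in $I$.

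First I would observe that for every $J\subseteq I$ we have $|J|\le|I|$, hence $\frac{1}{|I|}\le\frac{1}{|J|}$, so
\begin{equation*}
    \frac{1}{|I|}\sum_{\substack{J\in\mathcal{D}\\ J\subseteq I}}(f,h_J)^2
    \le \sum_{\substack{J\in\mathcal{D}\\ J\subseteq I}}\frac{1}{|J|}(f,h_J)^2
    \le \sum_{J\in\mathcal{D}}|J|^{-1}(f,h_J)^2
    = \|f\|_{\dot H^{1/2}(\mathbb{R})}^2,
\end{equation*}
since all summands are nonnegative. Taking the supremum over $I$ and invoking \eqref{E:BMO_char} then gives $\|f\|_{\mathrm{BMO}_{\mathcal{D}}}\apprle \|f\|_{\dot H^{1/2}(\mathbb{R})}\le\|f\|_{H^{1/2}_{\mathcal{D}}(\mathbb{R})}$, which yields the claimed inclusion $H^{1/2}(\mathbb{R})\subseteq\mathrm{BMO}(\mathbb{R})$ in the dyadic setting. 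One should note for completeness that in the dyadic BMO space one identifies functions differing by a constant (or restricts to the relevant quotient), so that the $L^2$-term in the full $H^{1/2}$-norm plays no essential role; it is the seminorm bound that carries the content.

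In fact there is essentially no obstacle here: the embedding at the critical exponent, which is the delicate borderline case in the Euclidean/Fourier picture, becomes almost tautological once one has the Haar-coefficient characterization \eqref{E:BMO_char} of dyadic BMO — the precise exponent $s=1/2$ is exactly the one that matches the weight $|J|^{-1}$ in the $\dot H^{1/2}$-seminorm with the normalizing factor $|I|^{-1}$ in the mean-oscillation average. The only point requiring a word of care is making sure the constant hidden in the $\simeq$ of \eqref{E:BMO_char} is genuinely dimensionless (independent of $I$ and $f$), which is standard. If a sharper statement is desired — e.g. that the inclusion is strict, or a quantitative comparison with the classical (non-dyadic) BMO — that would require additional work, but for the stated inclusion the one-line chain of inequalities above suffices.
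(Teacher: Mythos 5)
Your argument is correct and is essentially identical to the paper's proof: both bound $\tfrac{1}{|I|}\sum_{J\subseteq I}(f,h_J)^2$ by $\sum_{J\subseteq I}|J|^{-1}(f,h_J)^2$ using $|J|\le |I|$, then enlarge the index set to all of $\mathcal{D}$ and invoke the Haar-coefficient characterization~\eqref{E:BMO_char} of dyadic BMO. No changes needed.
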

\begin{proof}
    In view of~\eqref{E:BMO_char}, it suffices to prove that for any $f\in\mathscr{S}$ and $I\in\mathcal{D}_0$
    \[
    \frac{1}{|I|}\sum_{J\subseteq I}|(f,h_J)|^2\leq \sum_{J\in \mathcal{D}}|J|^{-1}|(f,h_J)|^2.
    \]
    However, the later is clear because
    \[
    \frac{1}{|I|}\sum_{J\subseteq I}|(f,h_J)|^2=\sum_{J\subseteq I}\frac{|J|}{|I|}|J|^{-1}||(f,h_J)|^2\leq \sum_{J\subseteq I}|J|^{-1}||(f,h_J)|^2\leq \sum_{J\in \mathcal{D}}|J|^{-1}|(f,h_J)|^2.
    \]
\end{proof}
\subsection{\texorpdfstring{Morrey: $s>1/2$}{[large]}}
We finish this section with a dyadic version of the classical Morrey-type embedding in the higher-regularity range. The proof is based on the following property of the dyadic averages $\langle f\rangle_I$. An alternative proof of the embedding via integration can be found in~\cite[Lemma 3.2]{AA15}.

\begin{lemma}\label{L:telescopic_averages}
Let $f\in L^2(\mathbb{R})$, $I\in\mathcal{D}$ and $k\geq 0$. Then,
    \begin{equation}\label{E:telescopic_averages}
    \langle f\rangle_{I}-\langle f\rangle_{I_{(k)}}=\sum_{I\subsetneq J\subseteq I_{(k)}}(f,h_J)h_J(I),
    \end{equation}
    where $I_{(k)}\in\mathcal{D}_k(I)$ denotes the $k$th dyadic ancestor of $I$.
\end{lemma}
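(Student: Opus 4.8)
The plan is to recognize equation~\eqref{E:telescopic_averages} as a telescoping identity built from the single-step version of~\eqref{E:avg_char}. Recall from~\eqref{E:avg_char} that for any dyadic interval $K$ we have $\langle f\rangle_K = \sum_{J\supsetneq K}(f,h_J)h_J(K)$. The idea is to apply this with $K=I$ and $K=I_{(k)}$ and subtract; the terms corresponding to $J\supsetneq I_{(k)}$ must cancel, leaving exactly the sum over the intervals $J$ with $I\subsetneq J\subseteq I_{(k)}$.

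First I would write, using~\eqref{E:avg_char} twice,
\[
\langle f\rangle_I-\langle f\rangle_{I_{(k)}}
=\sum_{J\supsetneq I}(f,h_J)h_J(I)-\sum_{J\supsetneq I_{(k)}}(f,h_J)h_J(I_{(k)}).
\]
The key observation is that if $J\supsetneq I_{(k)}$, then since $I\subset I_{(k)}\subsetneq J$, the interval $I$ lies entirely in whichever half of $J$ contains $I_{(k)}$; hence $h_J$ is constant on $I_{(k)}$ with value equal to its (constant) value on $I$, i.e.\ $h_J(I)=h_J(I_{(k)})$ by~\eqref{E:hI_in_J}. Therefore the two sums agree term-by-term over the index set $\{J\colon J\supsetneq I_{(k)}\}$, and subtracting cancels them. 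What remains from the first sum is precisely the indices $J$ with $J\supsetneq I$ but $J\not\supsetneq I_{(k)}$; by property (D3) of the dyadic grid these are exactly the intervals satisfying $I\subsetneq J\subseteq I_{(k)}$ (the chain of dyadic ancestors of $I$ up to and including $I_{(k)}$), which is the claimed right-hand side.

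One technical point worth making explicit is that both Haar expansions converge in $L^2(\mathbb{R})$, so the rearrangement and cancellation of the infinitely many terms $J\supsetneq I_{(k)}$ is justified; alternatively, one can avoid any convergence issue by noting that $\langle f\rangle_I-\langle f\rangle_{I_{(k)}}$ telescopes through the finitely many intermediate averages $\langle f\rangle_{I_{(j)}}$, $0\le j\le k$, and that each single step $\langle f\rangle_{I_{(j-1)}}-\langle f\rangle_{I_{(j)}}=(f,h_{I_{(j)}})h_{I_{(j)}}(I_{(j-1)})$ is the orthogonal projection onto $h_{I_{(j)}}$, since $h_{I_{(j)}}$ is the only Haar function among $\{h_J\colon J\supsetneq I_{(j-1)}\}\setminus\{h_J\colon J\supsetneq I_{(j)}\}$. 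Summing these $k$ single steps and using $h_{I_{(j)}}(I_{(j-1)})=h_{I_{(j)}}(I)$ gives~\eqref{E:telescopic_averages}. I expect the main (minor) obstacle to be bookkeeping the index sets cleanly—making sure the reader sees that $\{J\colon I\subsetneq J\subseteq I_{(k)}\}$ is exactly $\{I_{(1)},\dots,I_{(k)}\}$—rather than anything genuinely difficult.
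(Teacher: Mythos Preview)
Your proposal is correct and follows essentially the same approach as the paper: apply~\eqref{E:avg_char} to $\langle f\rangle_I$, split the sum at $I_{(k)}$, and use $h_J(I)=h_J(I_{(k)})$ for $J\supsetneq I_{(k)}$ to identify the tail as $\langle f\rangle_{I_{(k)}}$. Your additional remarks on convergence and the finite single-step telescoping are sound but go beyond what the paper includes.
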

\begin{proof}
    Using the expression~\eqref{E:avg_char},
    \begin{equation*}
    \langle f\rangle_I=\sum_{J\supsetneq I}(f,h_J)h_J(I)
    =\sum_{I\subsetneq J\subseteq I_{(k)}}(f,h_J)h_J(I)+
    \sum_{J\supsetneq I_{(k)}}(f,h_J)h_J(I).
    \end{equation*}
    Note that $I\subset I_{(k)}$ implies $h_J(I)=h_J(I_{(k)})$ for all $k\geq 0$ and therefore, in view of~\eqref{E:avg_char},
    \[
    \sum_{J\supsetneq I_{(k)}}(f,h_J)h_J(I)=\sum_{J\supsetneq I_{(k)}}(f,h_J)h_J(I_{(k)})=\langle f\rangle_{I_{(k)}}
    \]
    whence the result follows.
\end{proof}
\begin{proposition}\label{P:Morrey}
    Let $s>1/2$. Then it holds that $H^s(\mathbb{R})\subseteq L^\infty(\mathbb{R})$. In particular,
    \begin{equation}\label{E:Morrey_01}
        \|f\|_{L^\infty(\mathbb{R})}\apprle_s \|f\|_{H^s(\mathbb{R})}
    \end{equation}
    for any $f\in \mathscr{S}(\mathbb{R})$.
\end{proposition}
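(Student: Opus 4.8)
The plan is to bound the $L^\infty$-norm of $f\in\mathscr{S}(\mathbb{R})$ pointwise using the telescoping identity of Lemma~\ref{L:telescopic_averages}. Fix $x\in\mathbb{R}$ and let $I\in\mathcal{D}$ be the dyadic interval of length $2^{-n}$ containing $x$, so that $\langle f\rangle_I\to f(x)$ as $n\to\infty$ (this holds since $f$ is a finite Haar combination, hence continuous off a finite set, or more robustly by Lebesgue differentiation). Writing $I^{(0)}=I$ and $I^{(k)}$ for the $k$th ancestor, telescoping and letting $k\to\infty$ gives $f(x)-\langle f\rangle_{I^{(\infty)}}=\sum_{J\ni x}(f,h_J)h_J(x)$, but since there is no largest interval we instead fix a large ancestor $I^{(k)}$, take $k\to\infty$, and observe $\langle f\rangle_{I^{(k)}}\to0$ because $f\in L^2$ is supported (up to the finite Haar span) on a set of finite measure. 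Thus
\[
|f(x)|\le \sum_{J\in\mathcal{D},\,J\ni x}|(f,h_J)|\,|h_J(x)|=\sum_{J\ni x}|(f,h_J)|\,|J|^{-1/2}.
\]

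Next I would apply Cauchy–Schwarz to this sum, splitting off the factor $|J|^{-s}$ that appears in the $\dot H^s$-seminorm:
\[
\sum_{J\ni x}|(f,h_J)|\,|J|^{-1/2}
=\sum_{J\ni x}\big(|J|^{-s}|(f,h_J)|\big)\big(|J|^{s-1/2}\big)
\le\bigg(\sum_{J\ni x}|J|^{-2s}|(f,h_J)|^2\bigg)^{1/2}\bigg(\sum_{J\ni x}|J|^{2s-1}\bigg)^{1/2}.
\]
The first factor is bounded by $\|f\|_{\dot H^s(\mathbb{R})}$ since the sum over $J\ni x$ is a sub-sum of the full Haar sum. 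For the second factor, exactly one dyadic interval of each length $2^{-k}$ ($k\in\mathbb{Z}$) contains $x$, so $\sum_{J\ni x}|J|^{2s-1}=\sum_{k\in\mathbb{Z}}2^{-k(2s-1)}$; however this geometric series over all of $\mathbb{Z}$ diverges at one end, so I must be more careful — I should only telescope down from a \emph{fixed} scale, say from intervals of length $\le 1$, and handle the large scales separately.

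The cleaner route, which I would actually write up, is to invoke Lemma~\ref{L:norm_equivalence}: it suffices to control $f$ using only $\|f\|_{L^2}$ together with the tail sum over $|J|<1$. Concretely, fix $x$ and let $I_0\ni x$ be the unique dyadic interval of length $1$; apply Lemma~\ref{L:telescopic_averages} with $I^{(k)}$ the descendants of $I_0$ containing $x$ to get $\langle f\rangle_{I^{(k)}(x)}-\langle f\rangle_{I_0}=\sum_{I^{(k)}\subsetneq J\subseteq I_0}(f,h_J)h_J$, let $k\to\infty$, and bound $|f(x)-\langle f\rangle_{I_0}|\le\sum_{J\subseteq I_0,\,J\ni x}|(f,h_J)||J|^{-1/2}$. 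Now Cauchy–Schwarz gives the factor $\big(\sum_{k\ge0}2^{-k(2s-1)}\big)^{1/2}=(1-2^{1-2s})^{-1/2}<\infty$ precisely because $s>1/2$, times $\big(\sum_{J\subseteq I_0}|J|^{-2s}|(f,h_J)|^2\big)^{1/2}\le\|f\|_{\dot H^s}$. Finally $|\langle f\rangle_{I_0}|\le|I_0|^{-1/2}\|f\mathbf{1}_{I_0}\|_{L^2}\le\|f\|_{L^2}$ by Cauchy–Schwarz, so altogether $|f(x)|\apprle_s\|f\|_{L^2}+\|f\|_{\dot H^s}\simeq\|f\|_{H^s}$, uniformly in $x$.

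\textbf{Main obstacle.} The only real subtlety is the convergence/divergence bookkeeping in the geometric series: summing $|J|^{2s-1}$ over \emph{all} dyadic $J\ni x$ diverges for every $s$, so one genuinely needs to cut off at a fixed scale and absorb the coarse part into $\|f\|_{L^2}$ — which is exactly what Lemma~\ref{L:norm_equivalence} is set up to allow. Verifying $\langle f\rangle_{I^{(k)}}\to f(x)$ (here, $\langle f\rangle_{I^{(k)}(x)}\to f(x)$ as $k\to\infty$ for a fixed $x$) is routine for $f\in\mathscr{S}_{\mathcal{D}}(\mathbb{R})$ since such $f$ is a finite linear combination of step functions, constant on small enough dyadic intervals away from finitely many dyadic points; at those exceptional points one can argue by the standard dyadic martingale convergence or simply note that the stated $L^\infty$ bound, once established at all other points, extends by the finiteness of the Haar expansion.
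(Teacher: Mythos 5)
Your final argument is correct and is essentially the paper's own proof: split off the unit-scale average $\langle f\rangle_{I_0}$ (bounded by $\|f\|_{L^2}$ via Cauchy--Schwarz), telescope the averages down to $x$ using Lemma~\ref{L:telescopic_averages}, and apply Cauchy--Schwarz with the geometric series $\sum_{k\ge0}2^{-k(2s-1)}=(1-2^{1-2s})^{-1}$, which converges exactly because $s>1/2$. The preliminary detour through the divergent all-scales sum and the appeal to Lemma~\ref{L:norm_equivalence} are unnecessary scaffolding, but the write-up you say you would produce coincides with the paper's proof.
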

\begin{proof}
    Let $f\in \mathscr{S}(\mathbb{R})$ and $x\in\mathbb{R}$ be a Lebesgue point, i.e.
    \begin{equation}\label{E:Lebesgue_point}
        f(x)=\lim_{k\to\infty}\langle f\rangle_{I_x^{(k)}},
    \end{equation}
    where $I_x^{(k)}\in \mathcal{D}$ denotes the dyadic interval of length $2^{-k}$ that contains $x$. Noticing that $|I_x^{(0)}|=1$, the triangle inequality and Cauchy-Schwarz yield
    \begin{align*}
        |f(x)|&\leq |f(x)-\langle f\rangle_{I_x^{(0)}}|+|\langle f\rangle_{I_x^{(0)}}|\\
        &\leq |f(x)-\langle f\rangle_{I_x^{(0)}}|+\bigg(\int_{I_x^{(0)}}|f|^2dx\bigg)^{1/2}|I_x^{(0)}|^{1/2}\\
        &\leq |f(x)-\langle f\rangle_{I_x^{(0)}}|+\|f\|_{L^2}.
    \end{align*}
    Thus,~\eqref{E:Morrey_01} will follow once we show
    \begin{equation}\label{E:Morrey_02}
        |f(x)-\langle f\rangle_{I_x^{(0)}}|\apprle_s \bigg(\sum_{I\in\mathcal{D}}|I|^{-2s}|(f,h_{I})|^2\bigg)^{1/2}.
    \end{equation}
    In view of~\eqref{E:Lebesgue_point}, writing the difference on the left-hand side of~\eqref{E:Morrey_02} as a telescopic sum and applying Lemma~\ref{L:telescopic_averages} we get
    \begin{align*}
    |f(x)-\langle f\rangle_{I_x^{(0)}}|&\leq \sum_{k=0}^\infty |\langle f\rangle_{I_x^{(k)}}-\langle f\rangle_{I_x^{(k+1)}}|\\
    &\leq \sum_{k=0}^\infty\; \sum_{I_x^{(k+1)}\subsetneq J\subseteq I_x^{(k)}}|(f,h_J)|\,|h_J(I_x^{(k+1)})|\\
    &=\sum_{k=0}^\infty |(f,h_{I_x^{(k)}})|\,|h_{I_x^{(k)}}(I_x^{(k+1)})|\\
    &=\sum_{k=0}^\infty |(f,h_{I_x^{(k)}})|\,|I_x^{(k+1)}|^{-1/2},
    \end{align*}
    where in the last equality we used~\eqref{E:hI_in_J}. Rewriting and applying Cauchy-Schwarz to the latter yields
    \begin{align*}
    |f(x)-\langle f\rangle_{I_x^{(0)}}|&\leq\sum_{k=0}^\infty |I_x^{(k)}|^{-s}|(f,h_{I_x^{(k)}})|\,|I_x^{(k)}|^{-\frac{1}{2}+s}\\
    &\leq \bigg(\sum_{I\in\mathcal{D}}|I|^{-2s}|(f,h_{I})|^2\bigg)^{1/2}\bigg(\sum_{k=0}^\infty |I_x^{(k)}|^{-1+2s}\bigg)^{1/2}\\
    &=\|f\|_{\dot{H}^s}(1-2^{1-2s})^{-1/2}
    \end{align*}
    as we wanted to prove.    
\end{proof}
\section{Algebra property}\label{S:algebra_prop}
We now turn to the second main question of this paper, establishing the validity or failure of the algebra property of the space $H^s(\mathbb{R})$. The question reduces to finding out whether $f\in H^s$ implies $f^2\in H^s(\mathbb{R})$. In fact, if the latter holds and $f,g\in H^s(\mathbb{R})$, then $f+g\in H^s(\mathbb{R})$ and hence $(f+g)^2\in H^s(\mathbb{R})$ from which one concludes $fg\in H^s(\mathbb{R})$.

\medskip

As we shall see, the algebra property is closely related to the fractional Sobolev embedding: it fails in the lower-regularity range $0<s\leq 1/2$, for which we provide a counterexample, and it holds in the higher-regularity range $1/2<s<1$.

\medskip

The function $f^2$ thus plays a special role in this section, and we record the following expression of its Haar expansion.

\begin{lemma}\label{L:Haar_exp_square}
    For any $f\in\mathscr{S}(\mathbb{R})$,
    \begin{equation}\label{E:Haar_exp_square}
        f^2 = \sum_{\substack{I,J\in \mathcal{D} \\ I \subsetneqq J}} (f,h_I)^2 h_J(I)h_J + 2\sum_{\substack{I,J\in \mathcal{D} \\ I \subsetneqq J}} (f,h_I) (f,h_J) h_J(I) h_I.
    \end{equation}
\end{lemma}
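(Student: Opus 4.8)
The plan is to start from the Haar expansion $f=\sum_{I\in\mathcal{D}}(f,h_I)h_I$, square it, and organize the resulting double sum according to the nesting relation between the two Haar intervals. Writing
\[
f^2=\sum_{I,J\in\mathcal{D}}(f,h_I)(f,h_J)\,h_Ih_J,
\]
I would split the index set into three pieces: the diagonal $I=J$, the strictly nested pairs $I\subsetneqq J$, and their mirror image $J\subsetneqq I$. Since the Haar system is an orthonormal basis whose elements have the property that $h_I$ and $h_J$ have comparable supports only when one interval contains the other, the terms with $I\cap J=\varnothing$ and with $I=J'$ a sibling all vanish as products are zero or integrate away — more precisely, two distinct Haar functions whose defining intervals are not nested have disjoint supports, so $h_Ih_J\equiv 0$ in that case. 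This reduces the sum to exactly the three listed pieces.

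The second ingredient is the pointwise identity for products of nested Haar functions. If $I\subsetneqq J$, then $h_J$ is constant on $I$ equal to $h_J(I)$ (by the remark after~\eqref{E:def_Haar_fct}), so on the support of $h_I$ we have $h_Ih_J=h_J(I)\,h_I$. Likewise $h_I^2=\tfrac{1}{|I|}\mathbf 1_I$. Applying the first of these to the two off-diagonal pieces, the pair $I\subsetneqq J$ and the pair $J\subsetneqq I$ each contribute $\sum_{I\subsetneqq J}(f,h_I)(f,h_J)h_J(I)h_I$, and together they give the factor-$2$ term in~\eqref{E:Haar_exp_square}. For the diagonal I would write $h_I^2=\tfrac{1}{|I|}\mathbf 1_I$ and then re-expand the indicator in the Haar basis. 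Using~\eqref{E:avg_char} applied to $\mathbf 1_I$, or equivalently the elementary telescoping identity $\mathbf 1_I=|I|\sum_{J\supsetneqq I}h_J(I)h_J+\langle \mathbf 1_I\rangle$-type bookkeeping, one gets $\tfrac{1}{|I|}\mathbf1_I=\sum_{J\supsetneqq I}h_J(I)h_J$ up to the (zero-mean irrelevant) constant piece; substituting this into $\sum_I(f,h_I)^2\,\tfrac1{|I|}\mathbf1_I$ and swapping the order of summation produces precisely $\sum_{I\subsetneqq J}(f,h_I)^2h_J(I)h_J$, the first term in~\eqref{E:Haar_exp_square}.

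The step I expect to require the most care is handling the constant (level-$0$, or rather infinite-scale) part of the indicator expansion: the identity $\tfrac1{|I|}\mathbf 1_I=\sum_{J\supsetneqq I}h_J(I)h_J$ is only valid on $\mathbb{R}$ in a limiting sense, and since $f\in\mathscr{S}(\mathbb{R})$ is a finite linear combination of Haar functions, $\langle f^2\rangle$-type terms and the non-summability at large scales must be checked not to contribute. The cleanest way around this is to note that for $f\in\mathscr{S}(\mathbb{R})$ all sums are finite, and to verify the claimed identity by pairing both sides against an arbitrary $h_K$, $K\in\mathcal{D}$: compute $(f^2,h_K)$ directly from the definition of $h_K$ and from $\langle f^2\rangle$ on the two halves of $K$, and compare with the coefficient of $h_K$ extracted from the right-hand side of~\eqref{E:Haar_exp_square}. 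This turns the whole statement into a bookkeeping check of Haar coefficients, which, while slightly tedious, is elementary and avoids any convergence subtleties. I would present the proof in the first, direct form (square, split, use $h_Ih_J=h_J(I)h_I$ and re-expand $\mathbf1_I$) and relegate the coefficient-matching verification to a short remark if needed.
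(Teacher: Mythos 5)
Your proposal is correct and follows essentially the same route as the paper: expand the square of the Haar series, reduce the off-diagonal terms via $h_Ih_J=h_J(I)h_I$ for $I\subsetneqq J$, and re-expand the diagonal using $h_I^2=|I|^{-1}\mathbf{1}_I$ together with $\mathbf{1}_I=\sum_{J\supsetneq I}h_J(I)|I|h_J$. Your worry about a leftover constant piece is unfounded here, since over the full dyadic grid on $\mathbb{R}$ the Haar system is complete in $L^2$ and Parseval shows the indicator expansion is exact, so no extra coefficient-matching remark is needed.
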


\begin{proof}
    To see this, write $f=\sum_{I\in\mathcal{D}} (f,h_I)h_I$ and expand the square. Recalling that $h_I^2=|I|^{-1}\mathbf{1}_I$ and using the Haar expansion of the characteristic function, i.e. 
    \[
    \mathbf{1}_I = \sum_{J\in \mathcal{D}}(\mathbf{1}_I, h_J)h_J = \sum_{J \supsetneq I} h_J(I)|I|h_J,
    \]
    we get the result.
\end{proof}
As a consequence of the latter and in view of~\eqref{E:avg_char}, the Haar coefficient of $f^2$ associated with a dyadic interval $K\in\mathcal{D}$ reads
\begin{equation}\label{E:Haar_coef_square}
    \begin{aligned}
    (f^2,h_K)&=\sum_{\substack{I\in\mathcal{D}\\ I\subsetneq K}}(f,h_I)^2h_K(I)+2\sum_{\substack{I\in\mathcal{D}\\ I\supsetneq K}}(f,h_K)(f,h_I)h_I(K)\\
    &=\sum_{I\subsetneq K}(f,h_I)^2h_K(I)+2(f,h_K)\sum_{I\supsetneq K}(f,h_I)h_I(K)\\
    &=\sum_{I\subsetneq K}(f,h_I)^2h_K(I)+2(f,h_K)\langle f\rangle_K.
    \end{aligned}
\end{equation}
\subsection{Low regularity: \texorpdfstring{$0<s<1/2$}{[small]}}
To establish the failure of the algebra property in this range we construct a counterexample, for which we use the following notation: Let $I^{(0)}=(-1,0)$ and for any $k\geq 1$, let $I^{(k)}$ denote the right dyadic child of $I^{(k-1)}$, see Figure~\ref{F:left_dyadic}. In particular, $I^{(k)}\subset I^{(m)}_{+}$ for any $k>m\geq 0$.

\begin{figure}[H]
    \centering
\begin{tikzpicture}
\coordinate (S) at ($(0:0)$);
\coordinate (E) at ($(0:12)$);
\draw[densely dotted] (S) -- (E);
\coordinate[label=below: {\footnotesize $-1$}] (So) at ($(0:2.5)$);
\coordinate[label=above: {\footnotesize $I^{(0)}$}] (Eo) at ($(0:6)$);
\coordinate[label=below: {\footnotesize $0$}] (Eo) at ($(0:9.5)$);
\draw[(-)] (So) -- (Eo);
\coordinate (S1) at ($(0:0)+(270:1)$);
\coordinate (E1) at ($(0:12)+(270:1)$);
\draw[densely dotted] (S1) -- (E1);
\coordinate[label=below: {\footnotesize $0$}] (So1) at ($(0:9.5)+(270:1)$);
\coordinate[label=above: {\footnotesize $I^{(1)}$}] (Eo1) at ($(0:8)+(270:1)$);
\coordinate[label=below: {\footnotesize $-1/2$}] (Eo1) at ($(0:6)+(270:1)$);
\draw[(-)] (So1) -- (Eo1);
\coordinate (S2) at ($(0:0)+(270:2)$);
\coordinate (E2) at ($(0:12)+(270:2)$);
\draw[densely dotted] (S2) -- (E2);
\coordinate[label=below: {\footnotesize $0$}] (So2) at ($(0:9.5)+(270:2)$);
\coordinate[label=above: {\footnotesize $I^{(2)}$}] (Eo2) at ($(0:8.75)+(270:2)$);
\coordinate[label=below: {\footnotesize $-1/4$}] (Eo2) at ($(0:7.75)+(270:2)$);
\draw[(-)] (So2) -- (Eo2);
\coordinate (D1) at ($(0:6)+(270:2.5)$);
\coordinate (D2) at ($(0:6)+(270:3.25)$);
\draw[dotted, thick] (D1) -- (D2);
\end{tikzpicture}
    \caption{Dyadic intervals involved in the counterexample~\eqref{E:counterex_low_reg}.}
    \label{F:left_dyadic}
\end{figure}
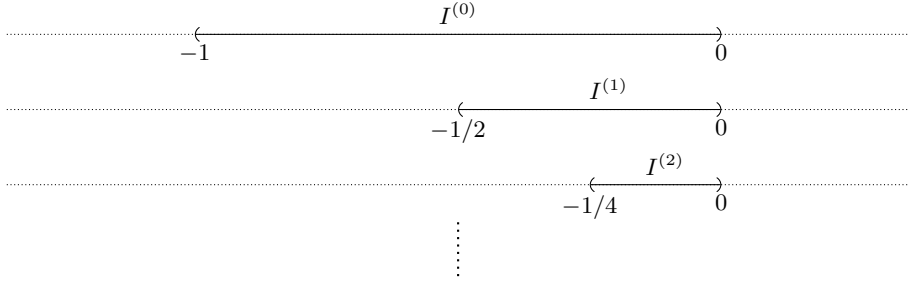
\begin{proposition}\label{P:couterex_low_reg}
    The space $H^s(\mathbb{R})$ is not an algebra when $0<s<1/2$. In particular, the function $f\in\mathscr{S}(\mathbb{R})$ given by
    \begin{equation}\label{E:counterex_low_reg}
        f:=\sum_{k=0}^\infty |I^{(k)}|^\alpha h_{I^{(k)}},
    \end{equation}
    where $s<\alpha<\frac{s}{2}+\frac{1}{4}$, belongs to $H^s(\mathbb{R})$ while $f^2\notin H^s(\mathbb{R})$.
\end{proposition}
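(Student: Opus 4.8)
The strategy is to compute the two relevant quantities for the specific function $f$ in~\eqref{E:counterex_low_reg}: first check that $f\in H^s(\mathbb{R})$ (equivalently, in $\dot H^s$ up to the $L^2$-term, by Lemma~\ref{L:norm_equivalence}), and then show that the Haar coefficients of $f^2$ decay too slowly for $f^2$ to lie in $H^s(\mathbb{R})$. The only nonzero Haar coefficients of $f$ are $(f,h_{I^{(k)}})=|I^{(k)}|^\alpha = 2^{-k\alpha}$, so the seminorm is
\[
\|f\|_{\dot H^s}^2=\sum_{k=0}^\infty |I^{(k)}|^{-2s}|I^{(k)}|^{2\alpha}=\sum_{k=0}^\infty 2^{-2k(\alpha-s)},
\]
which converges precisely because $\alpha>s$; the $L^2$-norm is the obviously convergent $\sum_k 2^{-2k\alpha}$. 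So the left half is a short geometric-series computation.

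\textbf{The coefficient of $f^2$.}
For the failure, I would use the formula~\eqref{E:Haar_coef_square} for $(f^2,h_K)$. The key observation is that the intervals $I^{(k)}$ are nested and each $I^{(k+1)}$ is the \emph{right} child of $I^{(k)}$, so for $K=I^{(m)}$ the "small" sum $\sum_{I\subsetneq K}(f,h_I)^2 h_K(I)$ runs over $I=I^{(k)}$ with $k>m$, all of which lie in $I^{(m)}_+$, hence $h_{I^{(m)}}(I^{(k)})=|I^{(m)}|^{-1/2}$ has a constant sign. Therefore the "small" sum does not suffer cancellation and equals
\[
|I^{(m)}|^{-1/2}\sum_{k>m}2^{-2k\alpha}\;\simeq\;|I^{(m)}|^{-1/2}\,2^{-2m\alpha}=2^{m/2}2^{-2m\alpha},
\]
up to the constant $\tfrac{4^{-\alpha}}{1-4^{-\alpha}}$ coming from the geometric series. (I would also need to note that the cross term $2(f,h_{I^{(m)}})\langle f\rangle_{I^{(m)}}$ is of the same or smaller order — indeed $\langle f\rangle_{I^{(m)}}=\sum_{k<m}2^{-k\alpha}h_{I^{(k)}}(I^{(m)})$ is a bounded-in-$m$ quantity times $|I^{(m)}|^{0}$-type factors, so this term is $\lesssim 2^{-m\alpha}$, which is negligible compared to $2^{m/2-2m\alpha}$ as long as $\tfrac12-2\alpha>-\alpha$, i.e. $\alpha<\tfrac12$; this is where the upper constraint on $\alpha$ begins to bite, and I should double-check the exact comparison.) Hence $|(f^2,h_{I^{(m)}})|\simeq 2^{m(1/2-2\alpha)}$.

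\textbf{Divergence of the $\dot H^s$-sum for $f^2$.}
Finally, restricting the sum defining $\|f^2\|_{\dot H^s}^2$ to the intervals $K=I^{(m)}$ alone gives the lower bound
\[
\|f^2\|_{\dot H^s}^2\;\geq\;\sum_{m=0}^\infty |I^{(m)}|^{-2s}|(f^2,h_{I^{(m)}})|^2\;\simeq\;\sum_{m=0}^\infty 2^{2ms}\,2^{2m(1/2-2\alpha)}=\sum_{m=0}^\infty 2^{2m(s+1/2-2\alpha)},
\]
which diverges exactly when $s+\tfrac12-2\alpha\geq 0$, i.e. when $\alpha\leq \tfrac s2+\tfrac14$. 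Since the hypothesis gives $\alpha<\tfrac s2+\tfrac14$, the exponent is strictly positive and the series diverges; by Lemma~\ref{L:norm_equivalence} this forces $f^2\notin H^s(\mathbb{R})$. The interval $s<\alpha<\tfrac s2+\tfrac14$ is nonempty precisely because $s<\tfrac12$, which explains why the construction is confined to the low-regularity range.

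\textbf{Main obstacle.}
The routine parts are the two geometric sums. The genuinely delicate point is controlling the cross term $2(f,h_K)\langle f\rangle_K$ in~\eqref{E:Haar_coef_square} and, more importantly, making sure that no cancellation occurs in the leading "diagonal" sum — this is exactly where the specific geometry (all $I^{(k)}$ nested in the right half of each ancestor, so that $h_K(I)$ keeps a fixed sign) is essential. I would want to state this sign/no-cancellation fact carefully as the crux of the argument, since replacing the right children by alternating children would destroy the lower bound.
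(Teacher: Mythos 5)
Your overall strategy is the same as the paper's: verify $f\in H^s(\mathbb{R})$ by a geometric series, compute $(f^2,h_{I^{(m)}})$ from the product formula, and lower-bound the $\dot H^s$-sum over the chain $\{I^{(m)}\}_{m\geq 0}$. The membership computation and the final divergence criterion $\alpha<\frac{s}{2}+\frac14$ are correct. There is, however, a genuine error at the step you yourself flagged: the cross term $2(f,h_{I^{(m)}})\langle f\rangle_{I^{(m)}}$ in~\eqref{E:Haar_coef_square}. You assert that $\langle f\rangle_{I^{(m)}}$ is bounded in $m$, so that the cross term is $\lesssim 2^{-m\alpha}$ and negligible. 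This is false: since $I^{(m)}\subset I^{(k)}_+$ for $k<m$, \eqref{E:avg_char} gives $h_{I^{(k)}}(I^{(m)})=+2^{k/2}$ and hence
\[
\langle f\rangle_{I^{(m)}}=\sum_{k=0}^{m-1}2^{-k\alpha}\,2^{k/2}=\sum_{k=0}^{m-1}2^{k(\frac12-\alpha)}\simeq 2^{m(\frac12-\alpha)}\longrightarrow\infty,
\]
precisely because $\alpha<\tfrac12$. The cross term is therefore $\simeq 2^{-m\alpha}\cdot 2^{m(\frac12-\alpha)}=2^{m(\frac12-2\alpha)}$, i.e.\ of exactly the same order as your ``diagonal'' sum, not smaller.

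Your conclusion $|(f^2,h_{I^{(m)}})|\gtrsim 2^{m(\frac12-2\alpha)}$ nevertheless survives, but for a reason you did not supply: both contributions are \emph{nonnegative}. The no-cancellation observation you make for the diagonal sum (all $I^{(k)}$ with $k>m$ sit in $I^{(m)}_+$, so $h_{I^{(m)}}(I^{(k)})>0$) applies equally to the cross term, since $(f,h_{I^{(m)}})=2^{-m\alpha}>0$ and $\langle f\rangle_{I^{(m)}}>0$ by the computation above; hence the two terms cannot cancel and the diagonal term alone already yields the lower bound. This is exactly how the paper argues: it computes both terms explicitly, writes $(f^2,h_{I^{(n)}})=2^{n(\frac12-2\alpha)}(d_\alpha+e_{\alpha,n})$ with $d_\alpha>0$ and $e_{\alpha,n}$ bounded below by a positive constant $e_\alpha$, and only then sums. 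Replace ``the cross term is negligible'' by ``the cross term has the same sign'' and your proof closes and coincides with the paper's.
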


\begin{remark}
Note that $s<\frac{s}{2}+\frac{1}{4}$ in the range $0<s<1/2$, so that $\alpha$ can be chosen in the desired range.
\end{remark}

\begin{proof}
    We first check that the function $f$ in~\eqref{E:counterex_low_reg} belongs to $H^s(\mathbb{R})$. By definition, the Haar coefficients of $f$ are
    \begin{equation}\label{E:counterex_low_reg_Haar_coeffs}
        (f,h_I)=\begin{cases}
            |I^{(k)}|^\alpha=2^{-k\alpha}&\text{if }I=I^{(k)}\text{ for some }k \geq 0,\\
            0&\text{otherwise}.
        \end{cases}
    \end{equation}
    Thus,
    \begin{align*}
    \| D^sf \|_{L^2}^2 
    & =  \sum_{I\in \mathcal{D}} |I|^{-2s} |(f,h_I)|^2 \\
    & = \sum_{k\geq 0} |I^{(k)}|^{-2s} |(f,h_{I^{(k)}})|^2 \\
    & = \sum_{k\geq 0} 2^{2ks} \, 2^{-2k\alpha}= \sum_{k\geq 0} 2^{2k(s-\alpha)}
    \end{align*}
    and the latter converges because $\alpha>s$.
    
    With a slight abuse of notation, we denote by $I^{(m)}$ with $m<0$ the dyadic ancestor of $I^{(m+1)}$. In this way, $I^{(k)}\subset I^{(m)}_{+}$ also holds for any $k\geq 0$ and $m<0$, and therefore $h_{I^{(m)}}(I^{(k)})=|I^{(m)}|^{-1/2}=2^{m/2}$ for such indices. Substituting in Lemma~\ref{L:Haar_exp_square} the coefficients from~\eqref{E:counterex_low_reg_Haar_coeffs} yields
    \begin{align*}
        f^2&=\sum_{m\in\mathbb{Z}}\sum_{k>\max\{0,m\}}(f,h_{I^{(k)}})^2h_{I^{(m)}}(I^{(k)})h_{I^{(m)}}
        +2\sum_{m\geq 0}\sum_{k>m}(f,h_{I^{(k)}})(f,h_{I^{(m)}})h_{I^{(m)}}(I^{(k)})h_{I^{(k)}}\\
        &=\sum_{m\in\mathbb{Z}}\sum_{k>\max\{0,m\}}(f,h_{I^{(k)}})^2h_{I^{(m)}}(I^{(k)})h_{I^{(m)}}
        +2\sum_{k> 0}\sum_{0\leq m <k}(f,h_{I^{(k)}})(f,h_{I^{(m)}})h_{I^{(m)}}(I^{(k)})h_{I^{(k)}}.
    \end{align*}
    We now use the latter representation to compute the Haar coefficients of $f^2$. First, note that $(f^2,h_I)=0$ whenever $I\neq I^{(n)}$ for any $n\in\mathbb{Z}$. Let us thus consider $I= I^{(n)}$ with $n<0$. In this case, applying~\eqref{E:counterex_low_reg_Haar_coeffs} and $h_{I^{(n)}}(I^{(k)})=|I^{(n)}|^{-1/2}=2^{\frac{n}{2}}$ we obtain
    \begin{align}
        (f^2,h_{I^{(n)}})&=\sum_{k>0}(f,h_{I^{(k)}})^2h_{I^{(n)}}(I^{(k)}) \notag \\
        &= \sum_{k=1}^\infty|I^{(k)}|^{2\alpha}h_{I^{(n)}}(I^{(k)}) 
        =2^{\frac{n}{2}}\sum_{k=1}^\infty 2^{-2\alpha k}=:2^{\frac{n}{2}}c_\alpha. \label{E:Haar_coeff_f2_lowreg_neg}
    \end{align}
    Note that $c_\alpha$ is indeed independent of $n$. In the case $I= I^{(n)}$ with $n\geq 0$, the Haar coefficient of $f^2$ becomes
    \begin{align}
        (f^2,h_{I^{(n)}})&=\sum_{k>n}(f,h_{I^{(k)}})^2h_{I^{(n)}}(I^{(k)}) +2\sum_{m=0}^{n-1}(f,h_{I^{(n)}})(f,h_{I^{(m)}})h_{I^{(m)}}(I^{(n)})\notag \\
        &=\sum_{k=n+1}^\infty 2^{-2\alpha k}2^{\frac{n}{2}}+2\sum_{m=0}^{n-1} 2^{-n\alpha}2^{-m\alpha}2^{\frac{m}{2}}\notag\\
        &=2^{\frac{n}{2}}\sum_{k=n+1}^\infty 2^{-2\alpha k}+22^{-n\alpha}\sum_{m=0}^{n-1} 2^{-m(\alpha-\frac{1}{2})}\notag\\
        &=2^{\frac{n}{2}}2^{-2n\alpha}\frac{1}{2^{2\alpha}}+22^{-n\alpha}2^{n(\frac{1}{2}-\alpha)}\frac{1-2^{-n(\frac{1}{2}-\alpha)}}{2^{\frac{1}{2}-\alpha}-1}\notag\\
        &=:2^{n(\frac{1}{2}-2\alpha)}(d_{\alpha}+e_{\alpha,n})
    \end{align}
    where $d_\alpha:=2^{-2\alpha}$ and
    \[
    e_{\alpha,n}:=2\frac{1-2^{-n(\frac{1}{2}-\alpha)}}{2^{\frac{1}{2}-\alpha}-1}\geq 2\frac{1-2^{-(\frac{1}{2}-\alpha)}}{2^{\frac{1}{2}-\alpha}-1}=:e_\alpha
    \]
    because $\alpha<s/2+1/4<1/2$.
    With the Haar coefficients in hand we can now give an explicit lower bound for the Sobolev seminorm
        \begin{align*}
            \| D^sf^2 \|_{L^2}^2 
            & =  \sum_{I\in \mathcal{D}} |I|^{-2s} |(f^2,h_I)|^2 =\sum_{m\in\mathbb{Z}}|I^{(m)}|^{-2s} |(f^2,h_{I^{(m)}})|^2 \\
            &\geq \sum_{m\geq 0}2^{2ms}e_{\alpha,n}^22^{m(1-4\alpha)}
            \geq e_\alpha^2 \sum_{m\geq 0}2^{m(2s-4\alpha+1)}.
        \end{align*}
        Since $\alpha<\frac{s}{2}+\frac{1}{4}$, the second series diverges and $f^2\notin H^s(\mathbb{R})$ as desired.
\end{proof}
\subsection{At criticality: \texorpdfstring{$s=1/2$}{[critical]}}
We learned in Proposition~\ref{P:algebra_high_reg} that the algebra property is satisfied when $s>1/2$, while the counterexample from Proposition~\ref{P:couterex_low_reg} required $s<1/2$. What happens when $s=1/2$? As it turns out, one needs to construct a slightly different function, whose Haar expansion uses again the right dyadic children $I^{(k)}$ that appeared in~\eqref{E:counterex_low_reg}.

\begin{proposition}\label{P:counterex_BMO}
    The space $H^{1/2}(\mathbb{R})$ is not an algebra. In particular, the function
    \begin{equation}\label{E:counterex_BMO}
    f:= \sum_{k>0} \frac{1}{2^{k/2}k^{\alpha/2}} h_{I^{(k)}},
    \end{equation}
    with $1<\alpha \leq 3/2$, belongs to $H^{1/2}(\mathbb{R})$ while $f^2\notin H^{1/2}(\mathbb{R})$.
\end{proposition}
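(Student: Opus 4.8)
The plan is to mimic the structure of the proof of Proposition~\ref{P:couterex_low_reg}, adapting the exponents to the critical case. First I would record that the Haar coefficients of $f$ in~\eqref{E:counterex_BMO} are $(f,h_{I^{(k)}}) = 2^{-k/2}k^{-\alpha/2}$ for $k>0$ and zero otherwise, and verify $f\in H^{1/2}(\mathbb{R})$: since
\[
\|D^{1/2}f\|_{L^2}^2 = \sum_{k>0} 2^{k}\,\big(2^{-k/2}k^{-\alpha/2}\big)^2 = \sum_{k>0} \frac{1}{k^{\alpha}},
\]
this converges precisely because $\alpha>1$. (The condition $\alpha\le 3/2$ will be what makes $f^2$ fail, so both endpoints of the range $1<\alpha\le 3/2$ play a role.)

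Next I would compute the Haar coefficients $(f^2,h_{I^{(n)}})$ using Lemma~\ref{L:Haar_exp_square}, exactly as in the low-regularity case, splitting into $n<0$ and $n\ge 0$. For $n<0$, only the diagonal term survives and $(f^2,h_{I^{(n)}}) = 2^{n/2}\sum_{k>0} 2^{-k}k^{-\alpha}$, a convergent sum times $2^{n/2}$, which contributes a finite amount to the $\dot H^{1/2}$ seminorm (namely $\sum_{n<0} 2^{-n}\cdot 2^{n}\cdot(\text{const})^2$, which converges). The interesting part is $n\ge 0$: here
\[
(f^2,h_{I^{(n)}}) = 2^{n/2}\sum_{k>n} 2^{-k}k^{-\alpha} + 2\cdot 2^{-n/2}n^{-\alpha/2}\sum_{m=0}^{n-1} 2^{-m/2}m^{-\alpha/2}\cdot 2^{m/2},
\]
using $h_{I^{(m)}}(I^{(n)}) = 2^{m/2}$ for $m<n$. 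The first term is $O(2^{-n/2}n^{-\alpha})$, hence harmlessly small. The second (cross) term is $2\cdot 2^{-n/2}n^{-\alpha/2}\sum_{m=1}^{n-1} m^{-\alpha/2}$; since $1<\alpha\le 3/2$ we have $\alpha/2\le 3/4<1$, so $\sum_{m=1}^{n-1} m^{-\alpha/2} \simeq n^{1-\alpha/2}$, and the cross term is $\simeq 2^{-n/2} n^{-\alpha/2}\cdot n^{1-\alpha/2} = 2^{-n/2} n^{1-\alpha}$.

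Finally I would lower-bound the seminorm by the contribution of the cross terms over $n\ge 1$:
\[
\|D^{1/2}f^2\|_{L^2}^2 \geq \sum_{n\ge 1} 2^{n}\,\big|(f^2,h_{I^{(n)}})\big|^2 \gtrsim \sum_{n\ge 1} 2^{n}\cdot 2^{-n}\, n^{2(1-\alpha)} = \sum_{n\ge 1} n^{2-2\alpha},
\]
which diverges exactly when $2-2\alpha \ge -1$, i.e. $\alpha\le 3/2$. Thus $f^2\notin\dot H^{1/2}(\mathbb{R})$, hence $f^2\notin H^{1/2}(\mathbb{R})$. The main obstacle is the careful asymptotic bookkeeping: one must check that the first (diagonal tail) term in $(f^2,h_{I^{(n)}})$ really is negligible compared to the cross term, so that a genuine lower bound (not merely an upper bound) on $|(f^2,h_{I^{(n)}})|$ of order $2^{-n/2}n^{1-\alpha}$ holds — for this one uses that $n^{1-\alpha}$ (with $\alpha\le 3/2$, so $1-\alpha\ge -1/2$) dominates the $O(n^{-\alpha})$ diagonal tail for large $n$, and then restricts the divergent series to large $n$. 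A harmonic-sum comparison $\sum_{m=1}^{n-1}m^{-\alpha/2}\simeq n^{1-\alpha/2}$ with explicit constants (or the integral test) makes this rigorous.
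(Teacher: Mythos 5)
Your proposal is correct and follows essentially the same route as the paper: compute the Haar coefficients of $f^2$ via Lemma~\ref{L:Haar_exp_square}, drop the (nonnegative) diagonal-tail term, and lower-bound the seminorm by the cross-term contribution $\sum_{n\ge 1} n^{-\alpha}\big(\sum_{m=1}^{n-1}m^{-\alpha/2}\big)^2\simeq\sum_{n\ge1}n^{2-2\alpha}$, which diverges for $\alpha\le 3/2$. (Your parenthetical accounting of the $n<0$ contribution has a sign slip in the weight $|I^{(n)}|^{-1}=2^{n}$, but that part is not needed for the divergence argument; also, since both terms in $(f^2,h_{I^{(n)}})$ are positive, no comparison of their asymptotic sizes is required to justify the lower bound.)
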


\begin{proof}
    Note that, by definition of $f$, for any $I\in\mathcal{D}$
    \begin{equation}\label{E:Haar_coeff_BMO_fct}
        (f,h_I)=\begin{cases}
            \frac{1}{2^{k/2}k^{\alpha/2}} &\text{if }I=I^{(k)}\text{ for some }k> 0,\\
            0&\text{otherwise}.
        \end{cases}
    \end{equation}

    Thus,
    \begin{align*}
    \| D^{1/2}f \|_{L^2(\mathbb{R})}^2 
    &=  \sum_{I\in \mathcal{D}} |I|^{-1} |(f,h_I)|^2 
     = \sum_{k> 0} |I^{(k)}|^{-1} |(f,h_{I^{(k)}})|^2 \\
    & = \sum_{k> 0} 2^{k} \, \frac{1}{2^{k}k^{\alpha}}= \sum_{k> 0} \frac{1}{k^\alpha}
    \end{align*}
    which converges since $\alpha>1$. A similar computation gives
    \begin{equation}\label{E:counter_BMO_L2}
    \| f \|_{L^2(\mathbb{R})}^2 =  \sum_{I\in \mathcal{D}} |(f,h_I)|^2 = \sum_{k> 0} \frac{1}{2^{k}k^{\alpha}} < +\infty,
    \end{equation}
    hence $f\in H^{1/2}(\mathbb{R})$. 

    \medskip
    
    In order to verify that $f^2\notin H^{1/2}(\mathbb{R})$, we proceed as in Proposition~\ref{P:couterex_low_reg} and compute its Haar coefficients. Plugging the particular Haar coefficients of $f$ from~\eqref{E:counterex_BMO} in the Haar expansion~\eqref{E:Haar_exp_square} we obtain
    \begin{align*}
        f^2&=\sum_{m\in\mathbb{Z}}\sum_{k>\max\{0,m\}}(f,h_{I^{(k)}})^2h_{I^{(m)}}(I^{(k)})h_{I^{(m)}}
        +2\sum_{m> 0}\sum_{k>m}(f,h_{I^{(k)}})(f,h_{I^{(m)}})h_{I^{(m)}}(I^{(k)})h_{I^{(k)}}\\
        &=\sum_{m\in\mathbb{Z}}\sum_{k>\max\{0,m\}}(f,h_{I^{(k)}})^2h_{I^{(m)}}(I^{(k)})h_{I^{(m)}}
        +2\sum_{k> 0}\sum_{0< m <k}(f,h_{I^{(k)}})(f,h_{I^{(m)}})h_{I^{(m)}}(I^{(k)})h_{I^{(k)}}.
    \end{align*}
    In view of the latter expression, $(f^2,h_I)=0$ as long as $I\neq I^{(n)}$ for any $n\in\mathbb{Z}$. In the case $n\leq 0$, it follows from the above that
    \begin{align*}
        (f^2,h_{I{(n)}})&=\sum_{k>0}(f,h_{I^{(k)}})^2h_{I^{(n)}}(I^{(k)})=2^{\frac{n}{2}}\sum_{k>0}(f,h_{I^{(k)}})^2=2^{\frac{n}{2}}\|f\|_{L^2}^2.
    \end{align*}
    When $n>0$, 
    \begin{align*}
        (f^2,h_{I^{(n)}})& = \sum_{k=n+1}^\infty(f,h_{I^{(k)}})^2h_{I^{(n)}}(I^{(k)})+2\sum_{m=1}^n(f,h_{I^{(n)}})(f,h_{I^{(m)}})h_{I^{(m)}}(I^{(k)})\\
        &=2^{\frac{n}{2}}\sum_{k=n+1}^\infty2^{-k}k^{-\alpha}+2{\cdot}2^{-\frac{n}{2}}n^{-\frac{n}{2}}\sum_{m=1}^{n-1}2^{-\frac{m}{2}}m^{-\frac{\alpha}{2}}2^{\frac{m}{2}}\\
        &=2^{\frac{n}{2}}d_{\alpha,n}+2^{-\frac{n}{2}}n^{-\frac{n}{2}}e_{\alpha,n},
    \end{align*}    
    where 
    \[
      d_{\alpha,n}:=\sum_{k=n+1}^\infty2^{-k}k^{-\alpha}\quad\text{and}\quad e_{\alpha,n}:=\sum_{m=1}^{n-1}m^{-\frac{\alpha}{2}}.
    \]
    With the coefficients in hand, we find 
    \begin{align*}
        \| D^{1/2} f^2 \|_{L^2}^2 & = \sum_{n\in \mathbb{Z}} 2^n (f^2, h_{I^{(n)}})^2 
        \geq \sum_{n=1}^\infty 2^n (f^2, h_{I^{(n)}})^2 \\
        &\geq \sum_{n=1}^\infty 2^n (2^{\frac{n}{2}}d_{\alpha,n}+2^{-\frac{n}{2}}n^{-\frac{n}{2}}e_{\alpha,n})^2 \\
        & \geq  \sum_{n=1}^\infty \frac{1}{n^{\alpha}}e_{\alpha_n}^2.
    \end{align*}
    Finally, note that 
    \begin{align*}
        e_{\alpha,n} = \sum_{m=1}^{n-1} m^{-\frac{\alpha}{2}} 
         \geq  \int_2^{n-1} x^{-\frac{\alpha}{2}} \, dx = \frac{2}{\alpha-2}(n^{-\frac{\alpha}{2}+1}-2^{-\frac{\alpha}{2}+1})\simeq n^{-\frac{\alpha}{2}+1}.
    \end{align*}
    Since $\alpha \leq \frac{3}{2}$, then $2\alpha-2\leq 1$ and it follows that
    \[
    \| D^{1/2} f^2 \|_{L^2}^2\apprge \sum_{n=1}^\infty\frac{1}{n^{2\alpha-2}}= +\infty
    \]
    whence $f^2 \notin H^{1/2}(\mathbb{R})$.
\end{proof}
\subsection{High regularity: \texorpdfstring{$1/2<s<1$}{[large]}}\mbox{}
The embedding from Proposition~\ref{P:Morrey} is crucial to prove the validity of the algebra property in this ``high regularity range''. The key lemma in the dyadic proof relies on the following estimate.

\begin{lemma}\label{L:estimate_high_reg}
    For any fixed $I\in\mathcal{D}$, 
    \begin{equation}\label{E:estimate_high_reg}
        \sum_{J\subseteq I}|J|^{-2s}|(f^2,h_J)|^2\apprle_s |I|^{2s-1}\|f\|_{\dot{H}^s}^2\sum_{J\subseteq I}|J|^{-2s}|(f,h_J)|^2.
    \end{equation}
\end{lemma}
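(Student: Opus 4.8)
The plan is to start from the explicit formula~\eqref{E:Haar_coef_square} for the Haar coefficient of $f^2$, namely
\[
(f^2,h_K)=\sum_{L\subsetneq K}(f,h_L)^2h_K(L)+2(f,h_K)\langle f\rangle_K,
\]
split the sum in~\eqref{E:estimate_high_reg} accordingly into a ``diagonal'' part coming from the term $2(f,h_K)\langle f\rangle_K$ and an ``off-diagonal'' part coming from $\sum_{L\subsetneq K}(f,h_L)^2h_K(L)$, and estimate each separately. For the diagonal part one has
\[
\sum_{K\subseteq I}|K|^{-2s}\,4|(f,h_K)|^2|\langle f\rangle_K|^2\le 4\Big(\sup_{K\subseteq I}|\langle f\rangle_K|^2\Big)\sum_{K\subseteq I}|K|^{-2s}|(f,h_K)|^2,
\]
and here I would invoke the Morrey estimate~\eqref{E:Morrey_01} of Proposition~\ref{P:Morrey} — applied after rescaling so that $|I|$ becomes the unit scale — to bound $\sup_{K\subseteq I}|\langle f\rangle_K|$ by $|I|^{s-1/2}\|f\|_{\dot H^s}$ (the power of $|I|$ being exactly what is needed to produce the $|I|^{2s-1}$ factor); this is essentially the statement that the local $L^\infty$ norm of $f$ on $I$ is controlled, up to the scaling factor, by the $\dot H^s$ seminorm, which is where $s>1/2$ enters.

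For the off-diagonal part, I would write, using $|h_K(L)|=|K|^{-1/2}$,
\[
\sum_{K\subseteq I}|K|^{-2s}\Big|\sum_{L\subsetneq K}(f,h_L)^2h_K(L)\Big|^2\le \sum_{K\subseteq I}|K|^{-2s-1}\Big(\sum_{L\subsetneq K}(f,h_L)^2\Big)^2,
\]
and then control the inner sum $\sum_{L\subsetneq K}(f,h_L)^2$ by $\big(\sum_{L\subsetneq K}|L|^{2s}\big)\cdot\sup_{L\subsetneq K}|L|^{-2s}|(f,h_L)|^2$ is too lossy; instead I would use the geometric weight more carefully, bounding $\sum_{L\subsetneq K}(f,h_L)^2=\sum_{L\subsetneq K}|L|^{2s}\cdot|L|^{-2s}|(f,h_L)|^2$ and then splitting into scales: for $L$ of length $2^{-j}|K|$, sum the $|L|^{-2s}|(f,h_L)|^2$ over that generation (bounded by the full $\dot H^s$ seminorm contribution) against the geometric factor $2^{-2js}|K|^{2s}$. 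This should yield $\sum_{L\subsetneq K}(f,h_L)^2\apprle_s |K|^{2s}\|f\|_{\dot H^s}^2$ — wait, that loses the needed square-summability in $K$. The cleaner route is Cauchy–Schwarz at the level of the outer sum combined with the one-dimensional nestedness of the Carleson-type quantity; concretely I expect one wants to first show $\sum_{L\subsetneq K}(f,h_L)^2\apprle_s |K|^{2s}\big(\sum_{L\subseteq K}|L|^{-2s}|(f,h_L)|^2\big)^{1/2}\|f\|_{\dot H^s}$ by Cauchy–Schwarz with the geometric weight $|L|^{s}$ and Proposition~\ref{P:weighted_indicator}, then square and sum over $K\subseteq I$, absorbing one factor of the tail sum into $\|f\|_{\dot H^s}^2$ and using that $\sum_{K\subseteq I}|K|^{-2s-1}\cdot|K|^{4s}=\sum_{K\subseteq I}|K|^{2s-1}\apprle_s |I|^{2s-1}$ by Proposition~\ref{P:weighted_indicator} again (this is where $2s-1>0$ is used for convergence).

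The key technical tools, then, are: (i) Proposition~\ref{P:Morrey}, in its scale-invariant form, to handle the diagonal term and supply the $|I|^{2s-1}$ factor; (ii) Proposition~\ref{P:weighted_indicator} to sum geometric series of the weights $|J|^{2s}$ and $|J|^{2s-1}$ over dyadic subintervals; and (iii) Cauchy–Schwarz applied with the weight $|L|^s$ to decouple $\sum_L(f,h_L)^2$ into a product of a $\dot H^s$-type factor and a geometric factor. The main obstacle I anticipate is the off-diagonal term: one has to extract from the double-indexed expression $\sum_{K\subseteq I}|K|^{-2s-1}\big(\sum_{L\subsetneq K}(f,h_L)^2\big)^2$ a bound that is simultaneously (a) proportional to $\|f\|_{\dot H^s}^2\sum_{J\subseteq I}|J|^{-2s}|(f,h_J)|^2$ and (b) carries exactly the factor $|I|^{2s-1}$, with no leftover logarithmic or dimensional loss; getting the bookkeeping of the powers of $|K|$ and $|I|$ to close requires using the condition $s>1/2$ precisely at the point where a series $\sum|K|^{2s-1}$ must converge, and being careful that the Cauchy–Schwarz split does not consume more than one copy of the seminorm. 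I would expect the diagonal term to be comparatively routine once Proposition~\ref{P:Morrey} is in hand.
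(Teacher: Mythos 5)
Your skeleton matches the paper's: split $(f^2,h_J)$ via~\eqref{E:Haar_coef_square} into the average term $2(f,h_J)\langle f\rangle_J$ and the subordinate term $\sum_{L\subsetneq J}(f,h_L)^2h_J(L)$, treat the first with Morrey and the second with geometric summation. But two of your key claims fail. First, the rescaled Morrey bound $\sup_{K\subseteq I}|\langle f\rangle_K|\apprle_s|I|^{s-1/2}\|f\|_{\dot H^s}$ is false: the homogeneous seminorm does not control averages. Take $f=h_{I'}$ with $I\subseteq I'$ and $|I'|$ large; then $\|f\|_{\dot H^s}=|I'|^{-s}$ while $|\langle f\rangle_I|=|I'|^{-1/2}$, and the ratio $|I'|^{s-1/2}$ is unbounded since $s>1/2$. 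The telescoping argument in Proposition~\ref{P:Morrey} controls $|\langle f\rangle_K-\langle f\rangle_I|$ by the seminorm, but the anchor $|\langle f\rangle_I|$ needs the $L^2$ part of the norm. The paper simply bounds $|\langle f\rangle_J|\le\|f\|_{L^\infty}\apprle_s\|f\|_{H^s}$ and does not attach the factor $|I|^{2s-1}$ to this term at all (in the application $|I|=1/2$, so that factor is a harmless constant); your attempt to manufacture $|I|^{2s-1}$ from the diagonal term is both unnecessary and unobtainable from $\|f\|_{\dot H^s}$ alone.

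Second, in the off-diagonal term the quantity $\sum_{K\subseteq I}|K|^{2s-1}$, summed over \emph{all} dyadic subintervals of $I$, diverges for every $s<1$: generation $k$ contributes $2^{k}\,(2^{-k}|I|)^{2s-1}=2^{k(2-2s)}|I|^{2s-1}$ (integrating Proposition~\ref{P:weighted_indicator} shows $\sum_{K\subseteq I}|K|^{\sigma}<\infty$ only for $\sigma>1$). Your Cauchy--Schwarz step $\sum_{L\subsetneq K}(f,h_L)^2\le|K|^{2s}\|f\|_{\dot H^s}\big(\sum_{L\subseteq K}|L|^{-2s}(f,h_L)^2\big)^{1/2}$ is fine, but after squaring you must \emph{keep} the factor $\sum_{L\subseteq K}|L|^{-2s}(f,h_L)^2$ and interchange the $K$- and $L$-sums, so that the remaining geometric sum runs over the single tower $\{K:L\subseteq K\subseteq I\}$, where $\sum_{j\ge 0}2^{-j(2s-1)}$ converges precisely because $s>1/2$ and yields the bound $|I|^{2s-1}(1-2^{1-2s})^{-1}$. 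If instead you absorb the tail sum into $\|f\|_{\dot H^s}^2$ before summing in $K$, you are left with the divergent full sum above, and you also destroy the factor $\sum_{J\subseteq I}|J|^{-2s}|(f,h_J)|^2$ that the right-hand side of~\eqref{E:estimate_high_reg} requires. This interchange is exactly the paper's move: it expands the square as a double sum over $\tilde I,\tilde K\subsetneq J$ and reduces everything to tower sums $\sum_{\tilde K\subsetneq J\subseteq I}|J|^{-2s-1}|\tilde K|^{4s}\apprle_s|I|^{2s-1}$. Your route can be repaired to the same endpoint, but as written the bookkeeping does not close.
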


\begin{proof}
    By virtue of~\eqref{E:Haar_coef_square},
    \begin{equation}\label{E:high_reg_01}
      \sum_{J\subseteq I}|J|^{-2s}(f^2,h_J)^2\leq 2  \sum_{J\subseteq I}|J|^{-2s}\bigg(\sum_{\tilde{I}\subsetneq J}(f,h_{\tilde{I}})^2h_J(\tilde{I})\bigg)^2+4\sum_{J\subseteq I}|J|^{-2s}(f,h_J)^2\langle f\rangle_J^2.
    \end{equation}
    For the second term above, Proposition~\ref{P:Morrey} implies
    \begin{equation}\label{E:high_reg_avg_term}
        \sum_{J\subseteq I}|J|^{-2s}(f,h_J)^2\langle f\rangle_J^2\leq \sum_{J\subseteq I}|J|^{-2s}(f,h_J)^2\|f\|_{L^\infty}^2\apprle_s\|f\|^2_{\dot{H}^s(\mathbb{R})}\sum_{J\subseteq I}|J|^{-2s}(f,h_J)^2.
    \end{equation}
    To estimate the first term on the right-hand side of~\eqref{E:high_reg_01}, we write out the square and rearrange the summations to get
    \begin{align}
        &\sum_{J\subseteq I}|J|^{-2s}\bigg(\sum_{\tilde{I}\subsetneq J}(f,h_{\tilde{I}})^2h_J(\tilde{I})\bigg)^2
        =\sum_{J\subseteq I}|J|^{-2s}\sum_{\tilde{I}\subsetneq J}\sum_{\tilde{K}\subsetneq J}(f,h_{\tilde{I}})^2(f,h_{\tilde{K}})^2h_J(\tilde{I})h_J(\tilde{K}) \notag\\
        &=\sum_{\tilde{I}\subsetneq I}\sum_{\tilde{K}\subsetneq I}(f,h_{\tilde{I}})^2(f,h_{\tilde{K}})^2\sum_{\substack{\tilde{I}\subsetneq J\subseteq I\\ \tilde{K}\subsetneq J\subseteq I}}|J|^{-2s}h_J(\tilde{I})h_J(\tilde{K})\notag\\ 
        &=\sum_{\tilde{I}\subsetneq I}|\tilde{I}|^{-2s}(f,h_{\tilde{I}})^2\sum_{\tilde{K}\subsetneq I}|\tilde{K}|^{-2s}(f,h_{\tilde{K}})^2\sum_{\substack{\tilde{I}\subsetneq J\subseteq I\\ \tilde{K}\subsetneq J\subseteq I}}|J|^{-2s}h_J(\tilde{I})h_J(\tilde{K})|\tilde{I}|^{2s}|\tilde{K}|^{2s}.\label{E:high_reg_sq_term}
    \end{align}
    We now focus on estimating the last summation. For this, we first use the fact that $h_J(\tilde{I})\leq |J|^{-1/2}$ because $\tilde{I}\subsetneq J$, and the same for $\tilde{K}$. Afterwards, we split the sum in two terms depending on whether $|\tilde{I}|<|\tilde{K}|$ or $|\tilde{I}|\geq|\tilde{K}|$. In this way we obtain
    \begin{align}
        \sum_{\substack{\tilde{I}\subsetneq J\subseteq I\\ \tilde{K}\subsetneq J\subseteq I}}|J|^{-2s}h_J(\tilde{I})h_J(\tilde{K})|\tilde{I}|^{2s}|\tilde{K}|^{2s}
        &\leq \sum_{\substack{\tilde{I}\subsetneq J\subseteq I\\ \tilde{K}\subsetneq J\subseteq I}}|J|^{-2s-1}|\tilde{I}|^{2s}|\tilde{K}|^{2s}\notag\\
        &\leq \sum_{\substack{\tilde{I}\subsetneq J\subseteq I\\ \tilde{K}\subsetneq J\subseteq I\\ |\tilde{I}|<|\tilde{K}|}}|J|^{-2s-1}|\tilde{I}|^{2s}|\tilde{K}|^{2s}+\sum_{\substack{\tilde{I}\subsetneq J\subseteq I\\ \tilde{K}\subsetneq J\subseteq I\\ |\tilde{I}|\geq |\tilde{K}|}}|J|^{-2s-1}|\tilde{I}|^{2s}|\tilde{K}|^{2s}\notag\\
        &\leq \sum_{\tilde{K}\subsetneq J\subseteq I}|J|^{-2s-1}|\tilde{K}|^{4s}+\sum_{\tilde{I}\subsetneq J\subseteq I}|J|^{-2s-1}|\tilde{I}|^{4s}.\label{E:high_reg_02}
    \end{align}
    To show that the terms above are bounded, note that 
    \begin{align*}
       \sum_{\tilde{K}\subsetneq J\subseteq I}|J|^{-2s-1}|\tilde{K}|^{4s}
        &=\sum_{\tilde{K}\subsetneq J\subseteq I}\bigg(\frac{|\tilde{K}|}{|J|}\bigg)^{2s}\frac{|\tilde{K}|^{2s}}{|J|}\\
        &\leq \sum_{\tilde{K}\subsetneq J\subseteq I}\bigg(\frac{|\tilde{K}|}{|J|}\bigg)^{2s}|J|^{2s-1}
        \leq|I|^{2s-1}\sum_{\tilde{K}\subsetneq J\subseteq I}\bigg(\frac{|\tilde{K}|}{|J|}\bigg)^{2s}.
    \end{align*}
    Further, we rewrite the index summation set as follows: For fixed $\tilde{K}$ and $I$ there are $\tilde{k}>k$ such that $\ell(\tilde{K})=2^{-\tilde{k}}$ and $\ell(I)=2^{-k}$. Then,
    \begin{equation}\label{E:high_reg_index_set}
    \{J\in\mathcal{D}\colon \tilde{K}\subsetneq J\subset I\}=\{J\colon \ell(J)=2^{-m}, k<m\leq\tilde{k}\}
    \end{equation}
    so that
    \begin{equation*}
        \sum_{\tilde{K}\subsetneq J\subseteq I}\bigg(\frac{|\tilde{K}|}{|J|}\bigg)^{2s}
        =\sum_{m=k}^{\tilde{k}}2^{2s(m-\tilde{k})}
        =\sum_{j=0}^{\tilde{k}-k}2^{-2sj}<\sum_{j=0}^{\infty}2^{-2sj}
    \end{equation*}
    which is uniformly bounded. An analog computation with $\tilde{I}$ instead of $\tilde{K}$ shows the same bound for the second term in~\eqref{E:high_reg_02}. Plugging back into~\eqref{E:high_reg_sq_term} yields
    \begin{align*}
        \sum_{J\subseteq I}|J|^{-2s}\bigg(\sum_{\tilde{I}\subsetneq J}(f,h_{\tilde{I}})^2h_J(\tilde{I})\bigg)^2
        &\apprle |I|^{2s-1}\sum_{\tilde{I}\subsetneq I}|\tilde{I}|^{-2s}(f,h_{\tilde{I}})^2\sum_{\tilde{K}\subsetneq I}|\tilde{K}|^{-2s}(f,h_{\tilde{K}})^2\\
        &\leq |I|^{2s-1}\|f\|_{\dot{H}^s(\mathbb{R})}^2\sum_{\tilde{I}\subsetneq I}|\tilde{I}|^{-2s}(f,h_{\tilde{I}})^2.
    \end{align*}
    Combined with~\eqref{E:high_reg_avg_term} and~\eqref{E:high_reg_01} one finally arrives at~\eqref{E:estimate_high_reg}.
\end{proof}

We are now in a position to conclude the algebra property of $H^s(\mathbb{R})$.

\begin{proposition}\label{P:algebra_high_reg}
    For any $1/2<s<1$, the space $H^s(\mathbb{R})$ is an algebra, that is, $f\in H^s(\mathbb{R})$ implies $f^2\in H^s(\mathbb{R})$.
\end{proposition}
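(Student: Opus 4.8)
The plan is to reduce the global estimate $\|D^s f^2\|_{L^2}^2 = \sum_{J\in\mathcal{D}} |J|^{-2s}|(f^2,h_J)|^2 < \infty$ to the local estimate already proved in Lemma~\ref{L:estimate_high_reg}, by summing over a suitable family of ``top'' intervals. Concretely, I would split the full Haar sum as
\[
\sum_{J\in\mathcal{D}} |J|^{-2s}|(f^2,h_J)|^2 = \sum_{\substack{J\in\mathcal{D}\\ |J|\geq 1}} |J|^{-2s}|(f^2,h_J)|^2 + \sum_{I\in\mathcal{D}_0}\ \sum_{\substack{J\subseteq I}} |J|^{-2s}|(f^2,h_J)|^2,
\]
where $\mathcal{D}_0$ is the partition of $\mathbb{R}$ into unit intervals and the second sum runs over the intervals contained in a unit interval. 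By Lemma~\ref{L:norm_equivalence} (applied to $f^2$) the first sum is comparable to $\|f^2\|_{L^2}^2$, which is finite since $f\in\mathscr{S}(\mathbb{R})$ and $H^s\subseteq L^\infty$ by Proposition~\ref{P:Morrey}, so $\|f^2\|_{L^2}\leq \|f\|_{L^\infty}\|f\|_{L^2} < \infty$. The real content is the second sum.

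For the second sum, I would apply Lemma~\ref{L:estimate_high_reg} to each unit interval $I\in\mathcal{D}_0$, where $|I|=1$, so that $|I|^{2s-1}=1$ and the estimate reads
\[
\sum_{J\subseteq I} |J|^{-2s}|(f^2,h_J)|^2 \apprle_s \|f\|_{\dot H^s}^2 \sum_{J\subseteq I} |J|^{-2s}|(f,h_J)|^2.
\]
Summing over $I\in\mathcal{D}_0$ and noting that the families $\{J\in\mathcal{D}: J\subseteq I\}$ for $I$ ranging over $\mathcal{D}_0$ are pairwise disjoint and together exhaust $\{J\in\mathcal{D}: |J|\leq 1\}$, I get
\[
\sum_{I\in\mathcal{D}_0}\ \sum_{J\subseteq I} |J|^{-2s}|(f^2,h_J)|^2 \apprle_s \|f\|_{\dot H^s}^2 \sum_{\substack{J\in\mathcal{D}\\ |J|\leq 1}} |J|^{-2s}|(f,h_J)|^2 \leq \|f\|_{\dot H^s}^2\, \|f\|_{H^s}^2,
\]
which is finite. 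Combining the two pieces yields $\|f^2\|_{\dot H^s}^2 \apprle_s \|f\|_{H^s}^4$, and together with $\|f^2\|_{L^2}\leq \|f\|_{L^\infty}\|f\|_{L^2}\apprle_s \|f\|_{H^s}^2$ (again by Proposition~\ref{P:Morrey}) we conclude $f^2\in H^s(\mathbb{R})$ with the quantitative bound $\|f^2\|_{H^s}\apprle_s \|f\|_{H^s}^2$.

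The one point requiring a little care — and the main obstacle, such as it is — is that Lemma~\ref{L:estimate_high_reg} controls only the Haar coefficients of $f^2$ at intervals of length at most $1$; the large-scale coefficients must be handled separately through the $L^2$ norm, and this is exactly where the hypothesis $s>1/2$ is being used implicitly, since Proposition~\ref{P:Morrey} (and hence the $L^\infty$ bound feeding both the $L^2$ estimate on $f^2$ and the average term inside Lemma~\ref{L:estimate_high_reg}) requires $s>1/2$. I would also double-check that the disjointness bookkeeping is correct: every $J\in\mathcal{D}$ with $|J|\leq 1$ is contained in exactly one $I\in\mathcal{D}_0$ by property (D3) together with the partition property (D2), so no Haar coefficient of $f^2$ at small scales is counted twice or omitted. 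Everything else is a routine assembly of the pieces already established.
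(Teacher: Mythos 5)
Your proposal is correct and follows essentially the same route as the paper's own proof: reduce to scales at most $1$ via Lemma~\ref{L:norm_equivalence}, tile those scales by a partition at unit scale so that Lemma~\ref{L:estimate_high_reg} applies on each tile with $|I|^{2s-1}$ a harmless constant, and control $\|f^2\|_{L^2}$ by $\|f\|_{L^\infty}\|f\|_{L^2}$ via Proposition~\ref{P:Morrey}. (The paper tiles by intervals of length $1/2$ so that $J\subseteq I$ matches the condition $|J|<1$ exactly; your unit-scale tiling double-counts the $|J|=1$ terms, so your displayed identity is really a ``$\leq$'', which is all that is needed.)
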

\begin{proof}
    Let $\mathcal{D}_1$ be a partition of $\mathbb{R}$ by dyadic intervals of length $1/2$. 
    By virtue of Lemma~\ref{L:norm_equivalence}, Lemma~\ref{L:estimate_high_reg} and Proposition~\ref{P:Morrey} we obtain
    \begin{align*}
        \|f^2\|_{H^s(\mathbb{R})}^2&\leq 2\|f^2\|_{L^2(\mathbb{R})}^2+2\sum_{\substack{I\in\mathcal{D}\\|I|<1}}|I|^{-2s}(f^2,h_I)|^2\\
        &=2\|f^2\|_{L^2(\mathbb{R})}^2+2\sum_{I\in\mathcal{D}_1}\sum_{J\subseteq I}|J|^{-2s}|(f^2,h_J)|^2\\
        &\apprle_s \|f\|_{L^\infty(\mathbb{R})}^2\|f\|_{L^2(\mathbb{R})}^2+\|f\|_{H^s(\mathbb{R})}^2\sum_{I\in\mathcal{D}_1}\sum_{J\subseteq I}|J|^{-2s}|(f,h_J)|^2\\
        &=\|f\|_{L^\infty(\mathbb{R})}^2\|f\|_{L^2(\mathbb{R})}^2+\|f\|_{H^s(\mathbb{R})}^2\sum_{\substack{I\in\mathcal{D}\\|I|<1}}|J|^{-2s}|(f,h_J)|^2\\
        &\apprle_s \|f\|_{H^s(\mathbb{R})}^2\Big(\|f\|_{L^2(\mathbb{R})}^2+\sum_{\substack{I\in\mathcal{D}\\|I|<1}}|J|^{-2s}|(f,h_J)|^2\Big)
        \leq \|f\|_{H^s(\mathbb{R})}^4.
    \end{align*}
\end{proof}
\bibliographystyle{amsplain}
\bibliography{Dyadic_Sobolev_Refs}
\end{document}